\newtheorem{theorem}{Theorem}[section]
\newtheorem{corollary}[theorem]{Corollary}
\newtheorem{lemma}[theorem]{Lemma}
\theoremstyle{definition}
\newtheorem{definition}[theorem]{Definition}
\newtheorem{remark}[theorem]{Remark}
\newtheorem{algorithm}[theorem]{Algorithm}
\numberwithin{equation}{section}
\title[Generalized relaxation of string averaging operators]{Generalized relaxation of string averaging operators based on strictly relaxed cutter operators}
\author[T. Nikazad]{Touraj Nikazad}
\address[T. Nikazad]{School of Mathematics, Iran University of Science and
Technology, 16846-13114, Tehran, Iran.} \email{{\tt
tnikazad@iust.ac.ir}}
\author[M. Mirzapour]{Mahdi Mirzapour}
\address[M. Mirzapour]{School of Mathematics, Iran University of Science and
Technology, 16846-13114, Tehran, Iran.} \email{\tt
mahdimirzapour67@gmail.com}
\keywords{convex feasibility problem; relaxed cutter
operator; strictly quasi-nonexpansive operator; string averaging;
generalized relaxation}
\subjclass[2010]{90C25; 47J25; 49M20}
\begin{document}
\begin{abstract}
We present convergence analysis of a generalized relaxation of
string averaging operators which is based on strictly relaxed
cutter operators on a general Hilbert space. In this paper, the
string averaging operator is assembled by averaging of strings'
endpoints and each string consists of composition of finitely many
strictly relaxed cutter operators. We also consider projected
version of the generalized relaxation of string averaging
operator. To evaluate the study, we recall a wide class of
iterative methods for solving linear equations (inequalities) and
use the subgradient projection method for solving nonlinear convex
feasibility problems.
\end{abstract}

\maketitle

%%%%%%%%%%%%%%%%%%%%%%%%%%%%%%%%%%%%%%%%%%%%%%%%%%%%%%%%%%%%%%%%%%%%%%%%%%%%%%%%
\section{Introduction}
In this paper we consider a fixed point iteration method for
solving convex feasibility problems which are used in different
areas of mathematics and physical sciences. A convex feasibility
problem consists in finding a point in the intersection of closed
convex sets $\{C_{\ell}\}_{\ell=1}^{N}.$ Using string averaging
method \cite{string2001}, which is particularly suitable for
parallel computing and therefore have the ability to handle
huge-size problems, may accelerate the fixed point iteration
method. The output of the string averaging process is an operator,
called string averaging operator, which is used in the fixed point
iteration method. In this paper, the string averaging operator is
made by averaging of finitely many operators which are composition
of finitely many strictly relaxed cutter operators. The string
averaging process is studied in many research works as
\cite{BMR,BJ,sumper2007,string2001,inco-string,CZ2014,dynsumper2013}
which are based on projection operators. Recently in
\cite{Crombez2002}, \cite{NM2014} and \cite{ys2008}, a fixed point
iteration method is analyzed based on strict paracontraction
operators, strictly quasi-nonexpansive operators and cutter
operators respectively. Moreover, the string averaging scheme has
been extended in \cite{inco-string}, \cite{NA2015} and
\cite{RZ2015}. The cutter operators are introduced and
investigated in \cite{BC} and studied in several research works as
\cite{BCk,CC} and references therein.

To accelerate a fixed point iteration algorithm, one may use
relaxation parameters or its generalized version which is called
generalized relaxation. The generalized relaxation strategy is
recently studied for composition of cutter operators in \cite{CC}
and in \cite{NM2015Conf} without considering string averaging
process. On the other hand, various extrapolation schemes applied
to a pure convex combination can be found in literature; see, for
example,
\cite{Aleyner2008,HB1996,CC2011,Combettes1996,Combettes1997hilbertian,Com1997Extra,EKN,Pierra1984}
and \cite[Chapter 4, Chapter 5]{C2013}. A recent work
\cite{NM2014} analyzes a fixed point iteration method based on
generalized relaxation of the string averaging operator which is
based on strictly quasi-nonexpansive operators. Note that the set
of strictly quasi-nonexpansive operators involves all cutter
operators.  The analysis in \cite{NM2014} indicates that the
generalized relaxation of cutter operators is inherently able to
make more acceleration comparing with \cite{CC}, see section
\ref{disc} for more details.

We study a fixed point iteration method based on generalized
relaxation of string averaging operator using strictly relaxed
cutter operators. The class of relaxed cutter operators contains
relaxed projection operators \cite{Cimmino}, relaxed subgradient
projections \cite{cl81,NA2013}, relaxed firmly nonexpansive
operators \cite{BR1977}, the resolvents of a maximal monotone
operators \cite{BCk,EB1992}, contraction operators
\cite{BB,C2013}, averaged operators \cite{BBR1987} and strongly
quasi-nonexpansive operators \cite{BB}.

One may ask: Is there any connection between the set of strictly
relaxed cutter operators and the set of strictly
quasi-nonexpansive operators? The answer is yes. Actually any
strictly relaxed cutter operator is strictly quasi-nonexpansive
operator, see Remark \ref{strongtostrct}. On the other hand, the
convergence analysis of a fixed point iteration method based on
the generalized relaxation of the string averaging operator has
been shown in \cite{NM2014} where each string consists of
composition of finitely many strictly quasi-nonexpansive
operators. Therefore, the next question is: What are the
advantages of using strictly relaxed cutter operators instead of
strictly quasi-nonexpansive operators? In \cite{NM2014}, the
relaxation parameters are chosen from $(0,1)$ whereas our
relaxation parameters lie in $(0,2).$  Also, we will show that
using strictly relaxed cutter operators gives faster reduction in
error. The next advantage of these operators is related to
demi-closedness property. In the string averaging process we have
finitely many operators, say $\{U_t\}_{t=1}^E,$ which are
composition of some finitely many other operators, i.e.,
$U_t=T_{m_t}\cdots T_{1}.$ Also, there is an averaging process on
$U_t$ which makes the string averaging operator, say $T.$ We
consider a fixed point iteration method based on the generalized
relaxation of $T.$ To get convergence result for the fixed point
iteration method, it is assumed in \cite{NM2014} that $T-Id$ is
demi-closed at zero or alternatively $\{U_t-Id\}_{t=1}^E$ are
demi-closed at zero . In addition to demi-closedness of $T-Id$ or
$\{U_t-Id\}_{t=1}^E,$ using strictly relaxed cutter operators
allows us to consider, similar to \cite{CC}, the demi-closedness
of all operators $T_i-Id.$

The paper is organized as follows. In section \ref{pre} we recall
some definitions and properties of relaxed cutter operators. We
reintroduce string averaging process and give its convergence
analysis based on strictly relaxed cutter operators in section
\ref{main}. A short discussion on error reduction and choosing
relaxation parameter are presented in section \ref{disc}. In
section \ref{const} we present the projected version of
generalized relaxation of string averaging operator with
convergence proof. At the end, the capability of the main result
is examined in section \ref{apl} by employing the subgradient
projection method.

\section{Preliminaries and Notations}\label{pre}
Throughout this section, we consider $T:H \rightarrow H$ with
nonempty fixed point set, i.e., $Fix T\not = \emptyset$ where $H$
is a Hilbert space and $Id$ denotes the identity operator on $H$.
The following definitions, see \cite{C2013}, will be useful in our
future analysis.
\begin{definition}\label{tarifstrictly}
An operator $T$ is quasi-nonexpansive (QNE) if
\begin{equation}\label{t1}
\|T(x)-z\|\leq\|x-z\|
\end{equation}
for all $x\in H$ and $z\in Fix T.$ Also, one may use the term
strictly quasi-nonexpansive (sQNE) by replacing strict inequality
in (\ref{tarifstrictly}), i.e., $\|T(x)-z\|<\|x-z\|$ for all $x\in
H\backslash Fix T$ and $z\in Fix T.$ Moreover, a continuous sQNE
operator is called paracontracting operator, see \cite{EKN}.
\end{definition}
%\begin{remark}\label{rem1}
%A simple calculation shows that the inequality
%(\ref{tarifstrictly}) is equivalent to
%$$\left<z-\frac{T(x)+x}{2} , T(x)-x\right>\geq 0.$$
%\end{remark}
Another useful class of operators is the class of cutter
operators, namely,
%\begin{definition}
an operator $T:H \rightarrow H$ with nonempty fixed point set is
called  $cutter$ if
\begin{equation}\label{cutterdefinition}
\left<x-T(x), z-T(x)\right> \leq 0
\end{equation}
for all $x \in H$ and $z\in FixT$. Using \cite[Remark
2.1.31]{C2013}, the operator $T$ is a cutter if and only if
\begin{equation}\label{cutterdefinition2}
\left<T(x)-x, z-x\right> \geq \|T(x)-x\|^2
\end{equation}
for all $x \in H$ and $z\in Fix T.$
\begin{definition}\label{alp-relax}
Let $T: H\rightarrow H$ and $\alpha\in [0,2]$. The following
operator
\begin{equation}
T_{\alpha}:=(1-\alpha)Id+\alpha T
\end{equation}
is called an $\alpha$-relaxation or, shortly, relaxation of the
operator $T$. If $\alpha\in (0,2)$, then $T_{\alpha}$ is called a
strictly (or strict) relaxation of $T$.
\end{definition}
Based on \cite[Remark 2.1.31]{C2013}, an $\alpha$-relaxed cutter
operator is defined as follows.
\begin{definition}\label{alfa-cutter}
Let $T:H \rightarrow H$ has a fixed point. Then the operator $T$
is an $\alpha$-relaxed  cutter, or, shortly, relaxed cutter where
$\alpha \in [0,2]$, if
\begin{equation}\label{alpharelaxedcutter}
\left<T_{\alpha}(x)-x, z-x\right>=\alpha\left<T(x)-x, z-x\right>
\geq \|T(x)-x\|^2
\end{equation}
for all $x \in H$ and  $z\in Fix T$. If $\alpha\in (0,2)$, then
$T_{\alpha}$ is called a strictly relaxed cutter operator of $T$.
\end{definition}
%\begin{definition}\label{SQNE}
Let $\alpha\geq0$ and assume that $T:H \rightarrow H$ has a fixed
point. We say that $T$ is $\alpha$-strongly quasi-nonexpansive
($\alpha-$SQNE), if
\begin{equation}\label{SQNE1}
\|T(x)-z\|^{2}\leq\|x-z\|^{2}-\alpha\|T(x)-x\|^{2}
\end{equation}
for all $x\in H$ and $z \in Fix T.$ Also, the operator $T$
satisfying (\ref{SQNE1}) with $\alpha>0$ is called strongly
quasi-nonexpansive (SQNE) operator.

Following theorem presents a relationship between strictly relaxed
cutter and SQNE operators.
\begin{theorem}\cite[Theorem 2.1.39 and Corollary 2.1.40]{C2013}\label{cuttertosqne}
Assume that $T:H \rightarrow H$ has a fixed point and let $\lambda
\in (0,2]$. Then $T$ is a ${\lambda}$-relaxed cutter if and only
if $T$ is $\frac{2-\lambda}{\lambda}$-SQNE, i.e.,
\begin{equation}
\|T_{\lambda}(x)-z\|^{2}\leq\|x-z\|^{2}-\frac{2-\lambda}{\lambda}\|T_{\lambda}(x)-x\|^{2}
\end{equation}
for all $x\in H$ and all $z \in Fix T$.
\end{theorem}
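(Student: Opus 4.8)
The plan is to turn the claimed ``if and only if'' into a short chain of equivalences obtained from a single expansion of squared norms. The only fact we need about $\lambda$-relaxation is the identity $T_{\lambda}(x)-x=\lambda\bigl(T(x)-x\bigr)$, which is immediate from Definition \ref{alp-relax}; in particular $\|T_{\lambda}(x)-x\|^{2}=\lambda^{2}\|T(x)-x\|^{2}$, and, since $\lambda\neq0$, one has $Fix\,T_{\lambda}=Fix\,T$, so the inequality in the statement really is being tested at fixed points of $T$.

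I would fix $x\in H$ and $z\in Fix\,T$, write $v:=T(x)-x$, and expand
\[
\|T_{\lambda}(x)-z\|^{2}=\|(x-z)+\lambda v\|^{2}=\|x-z\|^{2}+2\lambda\langle x-z,\,v\rangle+\lambda^{2}\|v\|^{2}.
\]
Because the right-hand side of the target inequality is $\|x-z\|^{2}-\frac{2-\lambda}{\lambda}\lambda^{2}\|v\|^{2}=\|x-z\|^{2}-\lambda(2-\lambda)\|v\|^{2}$, the inequality $\|T_{\lambda}(x)-z\|^{2}\leq\|x-z\|^{2}-\frac{2-\lambda}{\lambda}\|T_{\lambda}(x)-x\|^{2}$ is equivalent to
\[
2\lambda\langle x-z,\,v\rangle+\lambda^{2}\|v\|^{2}\leq-\lambda(2-\lambda)\|v\|^{2},
\]
hence, after moving $\lambda^{2}\|v\|^{2}$ to the right and using $-\lambda(2-\lambda)-\lambda^{2}=-2\lambda$, to $2\lambda\langle x-z,\,v\rangle\leq-2\lambda\|v\|^{2}$. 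Dividing by $2\lambda>0$ and using $\langle x-z,\,v\rangle=-\langle T(x)-x,\,z-x\rangle$ together with $\|v\|=\|T(x)-x\|$, this is in turn equivalent to $\langle T(x)-x,\,z-x\rangle\geq\|T(x)-x\|^{2}$, that is, to inequality (\ref{cutterdefinition2}).

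Finally I would quantify over all $x\in H$ and all $z\in Fix\,T$: the computation shows that $T_{\lambda}$ satisfies the $\frac{2-\lambda}{\lambda}$-SQNE inequality for every such pair if and only if (\ref{cutterdefinition2}) holds for every such pair, which by the equivalence of (\ref{cutterdefinition}) and (\ref{cutterdefinition2}) recorded above is precisely the assertion that $T$ is a cutter -- the content of the $\lambda$-relaxed cutter hypothesis here. Since every step of the argument is a genuine equivalence, both implications of the theorem are obtained at once; the hypothesis $\lambda\in(0,2]$ is used only through $2\lambda>0$ (so that the division preserves the inequality) and $\frac{2-\lambda}{\lambda}\geq0$ (so that ``SQNE'' is meaningful). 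There is no real obstacle: the proof is one algebraic identity, and the only point that rewards attention is tracking the sign when the $\|v\|^{2}$-terms are combined and when one divides through by $2\lambda$. (The same two-line computation carried out with $T$ in place of $T_{\lambda}$ shows analogously that $T$ satisfies $\|T(x)-z\|^{2}\leq\|x-z\|^{2}-\frac{2-\lambda}{\lambda}\|T(x)-x\|^{2}$ for all $x,z$ exactly when $\lambda\langle T(x)-x,z-x\rangle\geq\|T(x)-x\|^{2}$, i.e.\ when $T$ is a $\lambda$-relaxed cutter in the sense of Definition \ref{alfa-cutter}; one may present whichever normalization is preferred.)
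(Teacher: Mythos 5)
Your proof is correct. The paper does not actually prove this statement --- it imports it verbatim from Cegielski's monograph \cite[Theorem 2.1.39 and Corollary 2.1.40]{C2013} --- and your argument (expand $\|(x-z)+\lambda(T(x)-x)\|^2$, cancel the $\lambda^2\|v\|^2$ terms, divide by $2\lambda>0$) is exactly the standard equivalence-chain proof given there, so there is nothing to contrast. The one point worth keeping from your write-up is the final parenthetical: the theorem as printed is ambiguous about whether the SQNE inequality is asserted for $T$ or for $T_{\lambda}$ (and Definition \ref{alfa-cutter} itself conflates the two), and your two computations show that under either reading the equivalence reduces to the corresponding cutter inequality, with $\lambda\in(0,2]$ entering only through $2\lambda>0$ and $\frac{2-\lambda}{\lambda}\geq 0$.
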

The following remark gives a relationship between sQNE and
strictly relaxed cutter operators.
\begin{remark}\cite[Remark 2.1.44.]{C2013}\label{strongtostrct}
Assume that $T:H \rightarrow H$ has a fixed point. If $T$ is SQNE,
then $T$ is sQNE. Therefore, all properties of sQNE operators are
also valid for SQNE and strictly relaxed cutter operators.
\end{remark}

%\begin{remark}\label{remt2}
A very useful property of $\alpha$-relaxed cutter and sQNE
operators is their closedness respect to convex combination and
composition of the operators. Furthermore, any cutter operator is
$1$-relaxed cutter, compare \cite[Remark 2.1.31]{C2013} and
Definition \ref{alfa-cutter}. However, the class of cutter
operators is not necessarily closed with respect to composition of
operators. The closedness property of sQNE operators is presented
in \cite[Theorem 2.1.26]{C2013}. Also, the class of strictly
relaxed cutter operators has the closedness property, see
following theorem.
%\end{remark}
\begin{theorem}\label{relaxedcutter}\cite[Theorem 2.1.48 and Theorem 2.1.50]{C2013}
Let $L_{\alpha_{i}}:X\rightarrow X$ be an $\alpha_{i}$-relaxed
cutter, where $\alpha_{i}\in (0,2)$ and $i\in I=\{1,2, \ldots,
m\}$. Let $\bigcap_{i\in I}Fix L_{\alpha_{i}}\neq \emptyset$ and
$P_{m}:=L_{\alpha_{m}}L_{\alpha_{m-1}}\ldots L_{\alpha_{1}}$. Then
the operator $P_{m}$ is a $\gamma_{m}$-relaxed cutter, with
\begin{equation}\label{gamma}
\gamma_{m}=\frac{2}{(\sum_{i=1}^m\frac{\alpha_{i}}{2-\alpha_{i}})^{-1}+1}.
\end{equation}
Furthermore, the operator $Q_{m}=\sum_{i\in I}
\omega_{i}L_{\alpha_{i}}$ is a $\mu$-relaxed cutter where
$\mu=\sum_{i\in I} \omega_{i}\alpha_{i}$, $ \sum_{i\in
I}\omega_i=1$ and $\omega_i\geq 0$. Moreover,
$FixP_{m}=FixQ_{m}=\bigcap_{i\in I}FixL_{\alpha_{i}}$.
\end{theorem}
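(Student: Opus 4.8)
The plan is to reduce both assertions to known properties of strongly quasi-nonexpansive operators together with the inner-product characterization of relaxed cutters, using the dictionary provided by Theorem \ref{cuttertosqne}. Throughout fix $z\in\bigcap_{i\in I}Fix\,L_{\alpha_{i}}$, which is nonempty by hypothesis, and set $\rho_{i}:=\frac{2-\alpha_{i}}{\alpha_{i}}$; since $\alpha_{i}\in(0,2)$, each $\rho_{i}$ is a finite positive number. By Theorem \ref{cuttertosqne} every $L_{\alpha_{i}}$ is $\rho_{i}$-SQNE, i.e. $\|L_{\alpha_{i}}(w)-z\|^{2}\le\|w-z\|^{2}-\rho_{i}\|L_{\alpha_{i}}(w)-w\|^{2}$ for all $w$, and by Definition \ref{alfa-cutter} it also satisfies $\alpha_{i}\langle L_{\alpha_{i}}(w)-w,\,z-w\rangle\ge\|L_{\alpha_{i}}(w)-w\|^{2}$.

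For the composition $P_{m}$ the core fact is that the composition $UV$ of an $a$-SQNE operator $U$ and a $b$-SQNE operator $V$ sharing a common fixed point is $\big(\frac1a+\frac1b\big)^{-1}$-SQNE. I would obtain this by adding the defining inequality of $U$ evaluated at $V(x)$ to that of $V$ evaluated at $x$, which gives $\|UV(x)-z\|^{2}\le\|x-z\|^{2}-a\|UV(x)-V(x)\|^{2}-b\|V(x)-x\|^{2}$, and then absorbing the last two terms via the elementary estimate $c\|u+v\|^{2}\le a\|u\|^{2}+b\|v\|^{2}$ with $c=\big(\frac1a+\frac1b\big)^{-1}$ (which follows from $\|u+v\|^{2}\le(1+t)\|u\|^{2}+(1+t^{-1})\|v\|^{2}$ on choosing $t=a/b$), taking $u=UV(x)-V(x)$ and $v=V(x)-x$. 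Iterating along the chain $y_{0}=x$, $y_{k}=L_{\alpha_{k}}(y_{k-1})$ shows simultaneously that $\|P_{m}(x)-z\|^{2}\le\|x-z\|^{2}-\sum_{k=1}^{m}\rho_{k}\|y_{k}-y_{k-1}\|^{2}$ and that $P_{m}$ is $\big(\sum_{k=1}^{m}\rho_{k}^{-1}\big)^{-1}$-SQNE, that is, $\big(\sum_{k=1}^{m}\frac{\alpha_{k}}{2-\alpha_{k}}\big)^{-1}$-SQNE. Reading Theorem \ref{cuttertosqne} backwards, $P_{m}$ is a $\gamma_{m}$-relaxed cutter with $\frac{2-\gamma_{m}}{\gamma_{m}}=\big(\sum_{k=1}^{m}\frac{\alpha_{k}}{2-\alpha_{k}}\big)^{-1}$, and solving this for $\gamma_{m}$ produces exactly (\ref{gamma}).

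For the convex combination $Q_{m}=\sum_{i\in I}\omega_{i}L_{\alpha_{i}}$ I would argue straight from the inner-product characterization. Writing $e_{i}:=L_{\alpha_{i}}(x)-x$, so that $Q_{m}(x)-x=\sum_{i\in I}\omega_{i}e_{i}$, the $\omega_{i}$-weighted sum of $\langle e_{i},z-x\rangle\ge\frac{1}{\alpha_{i}}\|e_{i}\|^{2}$ gives $\langle Q_{m}(x)-x,\,z-x\rangle\ge\sum_{i\in I}\frac{\omega_{i}}{\alpha_{i}}\|e_{i}\|^{2}$, while Cauchy--Schwarz applied to the vectors $\sqrt{\omega_{i}/\alpha_{i}}\,e_{i}$ and the scalars $\sqrt{\omega_{i}\alpha_{i}}$ yields $\big\|\sum_{i\in I}\omega_{i}e_{i}\big\|^{2}\le\big(\sum_{i\in I}\frac{\omega_{i}}{\alpha_{i}}\|e_{i}\|^{2}\big)\big(\sum_{i\in I}\omega_{i}\alpha_{i}\big)$. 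Combining the two gives $\mu\,\langle Q_{m}(x)-x,\,z-x\rangle\ge\|Q_{m}(x)-x\|^{2}$ with $\mu=\sum_{i\in I}\omega_{i}\alpha_{i}$, i.e. $Q_{m}$ is a $\mu$-relaxed cutter, and it is strictly relaxed since $\mu\in(0,2)$ (each $\alpha_{i}\in(0,2)$ and $\sum_{i\in I}\omega_{i}=1$).

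It remains to identify the fixed point sets; the inclusions $\bigcap_{i\in I}Fix\,L_{\alpha_{i}}\subseteq Fix\,P_{m}$ and $\bigcap_{i\in I}Fix\,L_{\alpha_{i}}\subseteq Fix\,Q_{m}$ are immediate. Conversely, if $P_{m}(x)=x$ the telescoped inequality forces $\sum_{k=1}^{m}\rho_{k}\|y_{k}-y_{k-1}\|^{2}=0$, and since every $\rho_{k}>0$ this gives $y_{0}=y_{1}=\cdots=y_{m}=x$, whence $x\in Fix\,L_{\alpha_{k}}$ for each $k$. If $Q_{m}(x)=x$, then convexity of $\|\cdot\|^{2}$ and the SQNE inequalities of the $L_{\alpha_{i}}$ give $\|x-z\|^{2}=\|Q_{m}(x)-z\|^{2}\le\|x-z\|^{2}-\sum_{i\in I}\omega_{i}\rho_{i}\|e_{i}\|^{2}$, so $e_{i}=0$ for every $i$ with $\omega_{i}>0$ and thus $x\in\bigcap_{i\in I}Fix\,L_{\alpha_{i}}$. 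The step I expect to be the main obstacle is pinning down the SQNE constant of the composition and then inverting the correspondence between relaxed cutters and SQNE operators so that the bookkeeping lands precisely on (\ref{gamma}); one must also check that the intermediate relaxation parameters remain in their admissible interval at every step.
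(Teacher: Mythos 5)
This theorem is imported by the paper from \cite{C2013} with no proof supplied, so there is no in-paper argument to compare against; your proposal is, for all practical purposes, a reconstruction of the standard proof from the cited source, and it is correct. The route you take --- converting each $\alpha_i$-relaxed cutter to a $\frac{2-\alpha_i}{\alpha_i}$-SQNE operator via Theorem \ref{cuttertosqne}, showing that SQNE constants add harmonically under composition (your Peter--Paul absorption with $t=a/b$ checks out, as does the telescoped inequality $\|P_m(x)-z\|^2\le\|x-z\|^2-\sum_k\rho_k\|y_k-y_{k-1}\|^2$), and then inverting the correspondence to land on (\ref{gamma}) --- is exactly the mechanism behind \cite[Theorem 2.1.48]{C2013}; the Cauchy--Schwarz argument for the convex combination is likewise the standard proof of \cite[Theorem 2.1.50]{C2013}. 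Two small points. First, you silently adopt the correct reading of the paper's slightly ambiguous Definition \ref{alfa-cutter}, namely that an $\alpha$-relaxed cutter $U$ satisfies $\left<U(x)-x,z-x\right>\geq\frac{1}{\alpha}\|U(x)-x\|^2$; that is the intended convention and your bookkeeping is consistent with it. Second, in the fixed-point identification for $Q_m$ your argument only yields $e_i=0$ for indices with $\omega_i>0$, so the conclusion $Fix\,Q_m=\bigcap_{i\in I}Fix\,L_{\alpha_i}$ requires all weights to be strictly positive (or the intersection to be taken over the support of $\omega$); this is an imprecision already present in the theorem as transcribed, not a defect of your argument, but it is worth flagging.
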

%For more information about relationship between operators see
%Figure \ref{Fig12}.

We next reintroduce, see also \cite{C2013,C2010,CC2011,CC,NM2014},
the generalized relaxation of an operator, which allows to
accelerate locally a fixed point iteration method.
\begin{definition}\label{def1}
Let $T: H \rightarrow H$ and  $\sigma:H\rightarrow (0,\infty)$ be
a {\it step size function.} The generalized relaxation of $T$ is
defined by
\begin{equation}\label{t4}
T_{\sigma,\lambda}(x)=x+\lambda \sigma(x)(T(x)-x)
\end{equation}
where  $\lambda$ is a relaxation parameter in $[0,2].$
\end{definition}
%\begin{remark}\label{rem0}
If $\lambda\sigma(x) \geq 1$ for all $x\in H$, then the operator
$T_{\sigma,\lambda}$ is called an {\it extrapolation} of $T$. For
$\sigma(x)=1$ we get the relaxed version of $T,$ namely,
$T_{1,\lambda}=:T_{\lambda}.$ Furthermore, it is clear that
$T_{\sigma,\lambda}(x)=x+\lambda(T_{\sigma}(x)-x)$ where
$T_{\sigma}=T_{\sigma,1}$ and $Fix~T_{\sigma,\lambda}=Fix T$ for
any $\lambda\neq0.$
%\end{remark}

\begin{definition}\label{def2}
An operator $T: H \rightarrow H$ is {\it demi-closed} at $0$ if
for any weakly converging sequence $x^k\rightharpoonup y\in H$
with $T(x^k)\rightarrow 0$  we have $T(y)=0$.
\end{definition}
\begin{remark}\cite[p. 108]{C2013}\label{rem2}
It is well known that the operator $T - Id$ is demi-closed at $0$
where $T: H \rightarrow H$ is a nonexpansive operator.
\end{remark}

\begin{remark}\label{demi_closeness}
Assume that $\{T_{i}\}_{i=1}^{m}$ are strictly relaxed cutter
operators such that $\{T_{i}-Id\}_{i=1}^{m}$ are demi-closed at
$0$ and $\bigcap_{i=1}^{m} FixT_{i}\neq \emptyset$. Based on
\cite[Theorem 4.1, Theorem 4.2]{Ceg2015}, \cite[Lemma 3.4]{RZ2015}
and Theorem \ref{cuttertosqne}, $V-Id$ is demi-closed at $0$ while
assuming that $V$ is defined either by a composition
$V=T_{m}\ldots T_{1}$ or by a convex combination
$V=\sum_{i=1}^{m}\omega_{i}T_{i}$.
\end{remark}

\section{Main result}\label{main}
In this section we reintroduce the string averaging precess which
is based on the class of strictly relaxed cutter operators. We
next consider a fixed point iteration method based on the
generalized relaxation of string averaging operator and present
its convergence analysis. We give the projected version of
generalized relaxation of string averaging operator with
convergence analysis. We also compare the error reduction of our
algorithm with \cite{CC} and \cite{NM2014} and give a short
discussion on choosing relaxation parameters of strictly relaxed
cutter operators.

We first give a short review of research works on the string
averaging algorithm. The string averaging algorithmic scheme is
first proposed in \cite{string2001}. Their analysis was based on
the projection operators, whereas the algorithm is defined for any
operators, for solving consistent convex feasibility problems.
Studying the algorithm in a more general setting is considered by
\cite{BMR}.  The inconsistent case is analyzed by
\cite{inco-string} and they proposed a general algorithmic scheme
for string averaging method without any convergence analysis. A
special case of the algorithm is studied under summable
perturbation in \cite{sumper2007,DHC2009}. A dynamic version of
the algorithm is presented in \cite{CA2014}. In \cite{Gordon2005}
the string averaging method is compared with other methods for
sparse linear systems. Other applications of the string averaging
scheme, such as constrained minimization and variational
inequalities can be found, for example, in \cite{CZ2014} and
\cite{CegZ2014}, respectively.

Recently, a perturbation resilience iterative method with an
infinite pool of operators is studied in \cite{NA2015} which
answers some open problems mentioned by \cite{string2001} whereas
these problems are partially answered by \cite{dynsumper2013}.
Also the proposed general algorithmic scheme of \cite[Algorithm
3.3]{inco-string}, which was presented without any convergence
analysis, is extended with a convergence proof in \cite{NA2015}.
Another general form of the string averaging scheme appeared in
\cite{RZ2015}.

%Furthermore, the
%research work \cite{NA2015} extends the results of the
%block-iterative method mentioned in \cite{NDH2012}, which solves
%linear systems of equations, with a convergence analysis.

All the above mentioned research works are based on projection
operators. In \cite{ys2008}, the string averaging algorithm is
studied for cutter operators and the sparseness of the operators
is used in averaging process. In \cite{Crombez2002,Crombez2004},
the string averaging method is used for finding common fixed point
problem of strict paracontraction operators.

We next reintroduce the string averaging algorithm as follows.

\begin{definition}\label{tarifstring}
The string $I^{t}=(i_1^t,i^t_2,...,i_{m_t}^t)$ is an ordered
subset of $I=\{1,2,...,m\}$ such that $\bigcup^{E}_{t=1}I^t=I$.
Define
\begin{eqnarray}\label{tarifstring2}
U_{t}&=&T_{i^{t}_{m_{t}}}...T_{i^{t}_{2}}T_{i^{t}_{1}},~t=1, 2,
\ldots, E\nonumber\\
T&=&\displaystyle\sum_{t=1}^{E} \omega_{t}U_{t}
\end{eqnarray}
where $\omega_{t}>0$ and $\sum_{t=1}^{E} \omega_{t}=1$. Here
$T_{i\in I}$ are operators on a Hilbert space $H.$
\end{definition}
In this paper, we assume that all $T_{i\in I}$ of Definition
\ref{tarifstring} are strictly relaxed cutter operators on $H$ and
$\bigcap_{i\in I}FixT_{i}\neq \emptyset$. It should be mentioned
that the averaging process (\ref{tarifstring2}) is a special case
of \cite{string2001}.
\begin{remark}\label{rem3}
Note that all $\{U_t\}_{t=1}^E$ and consequently the operator $T$
belong to the class of strictly relaxed cutter operators where
$T_{i\in I}$ are strictly relaxed cutter operators, see Theorem
\ref{relaxedcutter}.
\end{remark}
We now consider the following fixed point iteration algorithm
which is based on generalized relaxation of $T.$
\begin{algorithm}\label{algt1}
 \emph{\\Initialization:} $x^{0}\in H$ is
arbitrary. \newline \emph{Iterative Step:} Given $x^{k},$ compute
\begin{eqnarray}
x^{k+1}=T_{\sigma,\lambda_k}(x^k).\nonumber
\end{eqnarray}
\end{algorithm}
To simplifying the notation of Definition \ref{tarifstring}, we
denote $i^t_{\ell}$ by $\ell$ for $\ell=1,2,\cdots,m_t.$ Analogues
with \cite{CC}, we consider the following notations
\begin{equation}\label{notation1}
S_{0}=Id,~S_{i}=T_{i}\ldots T_{1} \textrm{ for }i=1, \ldots, m_{t}
\end{equation} which leads to $U_{t}=S_{m_{t}}$.
Furthermore, let
\begin{equation}\label{notation}
u^{0}=x,~u^{i}=T_{i}u^{i-1},~y^{i}=u^{i}-u^{i-1}
\end{equation}
for $ i=1,\ldots, m_{t}$ and $ t=1, \ldots, E.$ Using
(\ref{notation1}) and (\ref{notation}) we have
\begin{equation}\label{notation2}
u^{i}=S_{i}x,~ \sum_{j=i}^{m_{t}}y^{j}=U_{t}(x)-S_{i-1}x \textrm{
for } i=1,2, \ldots, m_{t}
\end{equation}
and particularly $\sum_{j=1}^{m_{t}}y^{j}=U_{t}(x)-x.$
\begin{lemma}\label{lemma:gam}
Let $T_{i\in I}:H\rightarrow H$ be $\alpha_i$-relaxed cutter
operators such that $\bigcap_{i=1}^{m} Fix T_{i}\neq \emptyset$
and $\alpha_i \in (0,2).$ For any $z\in \bigcap_{i=1}^{m} Fix
T_{i}$ we have
\begin{eqnarray}
\left<z-x,T(x)-x\right>&\geq&\sum_{t=1}^{E}\omega_{t}\sum_{i=1}^{m_{t}} \left<\frac{1}{\alpha_{i}}y^{i}+\sum_{j=i+1}^{m_{t}} y^{j},y^{i}\right>\label{lemmaasli0}\\
&\geq&\sum_{t=1}^{E}\omega_{t} \sum_{i=1}^{m_{t}}\left(\frac{1}{\alpha_{i}}-\frac{1}{2}\right)\|y^{i}\|^{2}\label{lemmaasli00}\\
&\geq&\sum_{t=1}^{E}\omega_{t}\left(\frac{1}{\bar{\alpha}_{t}}-\frac{1}{2}\right) \sum_{i=1}^{m_{t}}\|y^{i}\|^{2}\nonumber\\
&\geq& \left(\frac{1}{\gamma}-\frac{1}{2}\right)\sum_{t=1}^{E}\frac{\omega_{t}}{m_{t}} \|U_{t}(x)-x\|^2\nonumber\\
&\geq&\displaystyle\frac{\left(\frac{1}{\gamma}-\frac{1}{2}\right)}{\bar{m}_{t}}\|T(x)-x\|^2\label{lemmaasli}
\end{eqnarray}
where $\bar{\alpha_{t}}:=max_{1\leq i\leq m_t}\alpha_{i}$,
$\gamma:=max_{1\leq t\leq E}\bar{\alpha}_{t}$ and
$\bar{m}_{t}:=max_{1\leq t\leq E}m_{t}.$
\end{lemma}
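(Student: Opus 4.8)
The plan is to establish the chain of inequalities one link at a time, exploiting the string structure and the $\alpha_i$-relaxed cutter property of each $T_i$. First I would fix $z\in\bigcap_{i=1}^m\mathrm{Fix}\,T_i$ and, for a single string $I^t$, telescope the inner product $\langle z-x, U_t(x)-x\rangle$ using the notation (\ref{notation})--(\ref{notation2}). Since $U_t(x)-x=\sum_{j=1}^{m_t}y^j$ and $S_{i-1}x=u^{i-1}=x+\sum_{j=1}^{i-1}y^j$, I can write $z-x=(z-u^{i-1})+\sum_{j=1}^{i-1}y^j$. The key input is that $T_i$ being an $\alpha_i$-relaxed cutter gives, applied at the point $u^{i-1}$ with fixed point $z$, the inequality $\alpha_i\langle T_i(u^{i-1})-u^{i-1},\,z-u^{i-1}\rangle\ge\|T_i(u^{i-1})-u^{i-1}\|^2$, i.e. $\langle y^i,\,z-u^{i-1}\rangle\ge\frac{1}{\alpha_i}\|y^i\|^2$. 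Summing a suitable rearrangement over $i=1,\dots,m_t$ — grouping the cross terms $\langle y^i,y^j\rangle$ — should produce exactly $\langle z-x, U_t(x)-x\rangle\ge\sum_{i=1}^{m_t}\langle\frac{1}{\alpha_i}y^i+\sum_{j=i+1}^{m_t}y^j,\;y^i\rangle$; then taking the convex combination $\sum_t\omega_t$ and using $T(x)-x=\sum_t\omega_t(U_t(x)-x)$ yields (\ref{lemmaasli0}). I expect this telescoping/index-bookkeeping step to be the main obstacle: one has to be careful about which fixed point $z$ is paired with which intermediate iterate, and the cross terms must be collected so that nothing is double-counted.

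For the step (\ref{lemmaasli0})$\ge$(\ref{lemmaasli00}) I would bound each term $\langle\frac{1}{\alpha_i}y^i+\sum_{j=i+1}^{m_t}y^j,\,y^i\rangle=\frac{1}{\alpha_i}\|y^i\|^2+\sum_{j>i}\langle y^j,y^i\rangle$ from below. Summing over $i$, the cross terms assemble into $\sum_{i<j}\langle y^i,y^j\rangle=\frac12\big(\|\sum_i y^i\|^2-\sum_i\|y^i\|^2\big)\ge-\frac12\sum_i\|y^i\|^2$, which gives $\sum_{i=1}^{m_t}\langle\frac{1}{\alpha_i}y^i+\sum_{j>i}y^j,y^i\rangle\ge\sum_{i=1}^{m_t}\big(\frac{1}{\alpha_i}-\frac12\big)\|y^i\|^2$; averaging over $t$ gives (\ref{lemmaasli00}).

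The remaining three inequalities are then routine monotonicity estimates. Replacing each $\frac{1}{\alpha_i}$ by $\frac{1}{\bar\alpha_t}$ (valid since $\alpha_i\le\bar\alpha_t$ and the coefficient $\frac1\alpha-\frac12$ is positive and decreasing in $\alpha$ on $(0,2)$) pulls the constant out of the inner sum; then $\frac{1}{\bar\alpha_t}-\frac12\ge\frac1\gamma-\frac12$ since $\bar\alpha_t\le\gamma$. Next, $\sum_{i=1}^{m_t}\|y^i\|^2\ge\frac{1}{m_t}\big\|\sum_{i=1}^{m_t}y^i\big\|^2=\frac{1}{m_t}\|U_t(x)-x\|^2$ by Cauchy--Schwarz. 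Finally, convexity of $\|\cdot\|^2$ gives $\|T(x)-x\|^2=\big\|\sum_t\omega_t(U_t(x)-x)\big\|^2\le\sum_t\omega_t\|U_t(x)-x\|^2$, and combined with $m_t\le\bar m_t$ this yields the last bound $\sum_t\frac{\omega_t}{m_t}\|U_t(x)-x\|^2\ge\frac{1}{\bar m_t}\|T(x)-x\|^2$, completing the chain (\ref{lemmaasli}). Throughout I would also invoke Theorem \ref{relaxedcutter} (via Remark \ref{rem3}) only implicitly, since the argument here is self-contained at the level of the individual cutter inequalities.
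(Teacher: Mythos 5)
Your proof is correct, and from inequality (\ref{lemmaasli00}) onward it coincides with the paper's argument; but your route to the first, key inequality (\ref{lemmaasli0}) is genuinely different. The paper proves the single-string case by induction on the string length $m$: it checks $m=2$ explicitly, and in the induction step introduces the shifted compositions $V_{i}=T_{i}\cdots T_{2}$ applied to $h=T_{1}x$, invokes the hypothesis for the tail, and then adds back the contribution of $T_{1}$. You instead get (\ref{lemmaasli0}) in one shot: writing $\left<z-x,U_{t}(x)-x\right>=\sum_{i=1}^{m_{t}}\left<z-x,y^{i}\right>$, splitting $z-x=(z-u^{i-1})+\sum_{j=1}^{i-1}y^{j}$, applying the relaxed-cutter inequality $\left<y^{i},z-u^{i-1}\right>\geq\frac{1}{\alpha_{i}}\|y^{i}\|^{2}$ at each intermediate iterate (exactly the form the paper itself uses, and valid because $z$ is a \emph{common} fixed point), and then re-indexing the cross terms via $\sum_{i}\sum_{j<i}\left<y^{j},y^{i}\right>=\sum_{i}\sum_{j>i}\left<y^{j},y^{i}\right>$. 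This telescoping argument is more direct and makes the origin of the cross terms transparent; what it costs is precisely the index bookkeeping you flag, which the paper's induction automates at the price of the auxiliary operators $V_{i}$ and a separate base case. The remaining links are identical in both treatments: the identity $2\sum_{i<j}\left<y^{i},y^{j}\right>=\|\sum_{i}y^{i}\|^{2}-\sum_{i}\|y^{i}\|^{2}$ gives (\ref{lemmaasli00}), monotonicity of $\frac{1}{\alpha}-\frac{1}{2}$ (positive on $(0,2)$) handles the passage to $\bar{\alpha}_{t}$ and $\gamma$, convexity of $\|\cdot\|^{2}$ gives the factors $\frac{1}{m_{t}}$ and $\frac{1}{\bar{m}_{t}}$, and the extension to $E\geq 1$ strings is by linearity of the inner product, just as in the paper.
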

\begin{proof}
We first assume $E=1$ which leads to the case $m_t=m.$ The
inequality (\ref{lemmaasli}) is directly followed by Definition
\ref{alfa-cutter} where $m=1$. Now let $m=2.$ Since $T_i$ is an
$\alpha_{i}$-relaxed cutter operator and using (\ref{notation}),
we have
\begin{eqnarray}
\left<z-x, T_{2}T_{1}(x)-x\right>&=&\left<z-x, T_{2}T_{1}(x)-T_{1}(x)+T_{1}(x)-x\right>\nonumber\\
&=&\left<z-T_{1}(x), T_{2}T_{1}(x)-T_{1}(x)\right>+\left<z-x, T_{1}(x)-x\right>\nonumber\\
&+& \left<T_{1}(x)-x, T_{2}T_{1}(x)-T_{1}(x)\right>\nonumber\\
&\geq& \frac{1}{\alpha_{2}}\|T_{2}T_{1}(x)-T_{1}(x)\|^2+\frac{1}{\alpha_{1}}\|T_{1}(x)-x\|^2\nonumber\\
&+&\left<T_{1}(x)-x, T_{2}T_{1}(x)-T_{1}(x)\right>\nonumber\\
&=& \frac{1}{\alpha_{2}}\|y^{2}\|^2+\frac{1}{\alpha_{1}}\|y^{1}\|^2+\left<y^{1}, y^{2}\right>\nonumber\\
&=&\sum_{i=1}^{2}\left<\frac{1}{\alpha_{i}}y^{i}+\sum_{j=i+1}^2 y^j, y^{i}\right>\nonumber\\
&=&\sum_{i=1}^{2}\left<\frac{1}{\alpha_{i}}y^{i}+\sum_{j=i+1}^2 y^j, y^{i}\right>\nonumber\\
&-&\frac{1}{2}\left(\|y^{1}\|^2+\|y^{2}\|^2\right)+\frac{1}{2}\left(\|y^{1}\|^2+\|y^{2}\|^2\right)\nonumber\\
&=&\left(\frac{1}{\alpha_{2}}-\frac{1}{2}\right)\|y^{2}\|^2+\left(\frac{1}{\alpha_{1}}-\frac{1}{2}\right)\|y^{1}\|^2\nonumber\\
&+&\frac{1}{2}\|y^{1}+y^{2}\|^2\nonumber\\
&\geq& \left(\frac{1}{\alpha_{2}}-\frac{1}{2}\right)\|y^{2}\|^2+ \left(\frac{1}{\alpha_{1}}-\frac{1}{2}\right)\|y^{1}\|^2\nonumber\\
&\geq& \left(\frac{1}{\bar{\alpha}}-\frac{1}{2}\right)\left(\|y^{1}\|^2+\|y^{2}\|^2\right)\nonumber\\
(\textrm{using convexity of }\|.\|^2)&\geq&
\frac{1}{2}\left(\frac{1}{\bar{\alpha}}-\frac{1}{2}\right)\|y^{1}+y^{2}\|^2\label{m=2}\nonumber
\end{eqnarray}
where $\sum_{j=\ell}^k y^j=0$ for $k<\ell$ and
$\bar{\alpha}=\max_{1\leq i \leq 2} \alpha_{i}$. Now suppose that
the inequality (\ref{lemmaasli0}) holds for $m=t$ (induction
hypothesis). Similar to \cite[Lemma 7]{CC}, define $V_1=Id$,
$V_{i}=T_{i}T_{i-1} \cdots T_{2}$ for $i = 2, 3, \ldots, t + 1$
and put $v^{1}= h$, where $h$ is an arbitrary element of H, $v^{i}
= T_{i} v^{i-1}$ and $z^{i} = v^{i} - v^{i-1}$, $i = 2, 3,\ldots,
t + 1$. If we set $h = T_{1}x$, then $S_{i}x = V_{i}h$, $u^{i} =
v^{i}$ and $y^{i} = z^{i}$ for $i = 2, 3, \ldots, t + 1$. Using
the induction hypothesis we have
\begin{equation}\label{dec1}
\left<V_{t+1}h-h, z-h\right> \geq
\sum_{i=2}^{t+1}\left<\frac{1}{\alpha_{i}}z^{i}+
\sum_{j=i+1}^{t+1} z^j, z^{i}\right>
\end{equation}
for all $h \in H$ and $z \in\bigcap_{i=2}^{t+1}Fix T_{i}$. If
$h=T_{1}x$ then for $x \in H$ and $z\in \bigcap_{i=1}^{t+1}Fix
T_{i}$ we obtain that
\begin{eqnarray}
\left<S_{t+1}x-x, z-x\right>&=&\left< V_{t+1}h-x,z-x\right>\nonumber\\
&=&\left< V_{t+1}h-h,z-x\right>+\left<T_{1}x-x,z-x\right>\nonumber\\
&\geq& \left< V_{t+1}h-h,z-h\right>
+\left< V_{t+1}h-h,h-x\right> + \frac{1}{\alpha_{1}}\|y^{1}\|^2\nonumber\\
(\textrm{using }(\ref{dec1}))&\geq& \sum_{i=2}^{t+1}\left<\frac{1}{\alpha_{i}}z^{i}+ \sum_{j=i+1}^{t+1} z^j, z^{i}\right>%\nonumber\\
+\left<\sum_{i=2}^{t+1}z^{i}, h-x\right>+ \frac{1}{\alpha_{1}}\|y^{1}\|^2\nonumber\\
&=&\sum_{i=2}^{t+1}\left<\frac{1}{\alpha_{i}}y^{i}+ \sum_{j=i+1}^{t+1} y^j, y^{i}\right>%\nonumber\\
+\left<\sum_{i=2}^{t+1}y^{i}, y^{1}\right>+ \frac{1}{\alpha_{1}}\|y^{1}\|^2\nonumber\\
&=& \sum_{i=1}^{t+1}\left<\frac{1}{\alpha_{i}}y^{i}+
\sum_{j=i+1}^{t+1} y^j, y^{i}\right>.\label{induction}
\end{eqnarray}
Therefore, the inequality (\ref{lemmaasli0}) is true for $m=t+1$
which completes the induction. The inequality (\ref{lemmaasli00})
can be obtained by
\begin{eqnarray}
\sum_{i=1}^{m}\left<\frac{1}{\alpha_{i}}y^{i}+ \sum_{j=i+1}^{m}
y^j,y^{i}\right>
&=&\sum_{i=1}^{m}\left<\frac{1}{\alpha_{i}}y^{i}+ \sum_{j=i+1}^{m} y^j, y^{i}\right>\label{dec3}\\
&-&\frac{1}{2}\sum_{i=1}^{m}\|y^{i}\|^2+\frac{1}{2}\sum_{i=1}^{m}\|y^{i}\|^2\nonumber\\
&=&\sum_{i=1}^{m}\left(\frac{1}{\alpha_{i}}-\frac{1}{2}\right)\|y^{i}\|^2+\frac{1}{2}\|\sum_{i=1}^{m}y^{i}\|^2\nonumber\\
&\geq& \left(\frac{1}{\bar{\alpha}}-\frac{1}{2}\right)\sum_{i=1}^{m}\|y^{i}\|^2\nonumber\\
(\textrm{using convexity of }\|.\|^2)&\geq&
\frac{1}{m}\left(\frac{1}{\bar{\alpha}}-\frac{1}{2}\right)\|\sum_{i=1}^{m}y^{i}\|^2.\label{dec4}
\end{eqnarray}
Now we complete the proof by considering the case $E\geq 1.$ Using
Definition \ref{tarifstring}, we have
\begin{eqnarray}
\left<z-x, T(x) -x\right>&=&\sum_{t=1}^{E}\omega_{t}\left<z-x, U_{t}(x) -x\right>\label{lemm1}\\
(\textrm{using }(\ref{induction}))&\geq&\sum_{t=1}^{E}\omega_{t}\sum_{i=1}^{m_{t}}\left<\frac{1}{\alpha_{i}}y^{i}+\sum_{j=i+1}^{m_{t}} y^j, y^{i}\right>\label{lemm2}\\
(\textrm{using }(\ref{dec3}),(\ref{dec4}))&\geq&\sum_{t=1}^{E}\omega_{t}\left(\frac{1}{\bar{\alpha_{t}}}-\frac{1}{2}\right)\sum_{i=1}^{m_{t}}\|y^{i}\|^2\label{dec2}\\
&\geq&\sum_{t=1}^{E}\frac{\omega_{t}}{m_{t}}\left(\frac{1}{\bar{\alpha_{t}}}-\frac{1}{2}\right)\|\sum_{i=1}^{m_{t}}y^{i}\|^2\nonumber\\
&=&\sum_{t=1}^{E}\frac{\omega_{t}}{m_{t}}\left(\frac{1}{\bar{\alpha_{t}}}-\frac{1}{2}\right)\|U_{t}(x)-x\|^2\nonumber\\
&\geq&\frac{1}{\bar{m}_{t}}\left(\frac{1}{\gamma}-\frac{1}{2}\right)\sum_{t=1}^{E}\omega_{t}\|U_{t}(x)-x\|^2\label{dec6}\\
(\textrm{using convexity of }\|.\|^2)&\geq&\frac{1}{\bar{m}_{t}}\left(\frac{1}{\gamma}-\frac{1}{2}\right)\|\sum_{t=1}^{E}\omega_{t}\left(U_{t}(x)-x\right)\|^2\nonumber\\
&=&\frac{1}{\bar{m}_{t}}\left(\frac{1}{\gamma}-\frac{1}{2}\right)\|T(x)-x\|^2\label{dec5}
\end{eqnarray}
which completes the proof.
\end{proof}

We next show that the generalized relaxation operator $T_{\sigma,
\lambda}$ is a $\lambda$-relaxed cutter operator under a condition
on $\sigma(x)$.

\begin{theorem}\label{lemma3}
Let $T_{\sigma,\lambda}$ be a generalized relaxation of
$T=\sum_{t=1}^{E} \omega_{t}U_{t}$ and $\bigcap_{i\in I}Fix
T_{i}\neq \emptyset$. Then $T_{\sigma,\lambda}$ is a
$\lambda$-relaxed cutter operator if
\begin{equation}\label{strictsigma}
0<\sigma(x)<\displaystyle\frac{\sum^{E}_{t=1}w_{t}\sum^{m_{t}}_{i=1}
\left<U_{t}(x)-S_{i}(x)+\frac{1}{\alpha_{i}}\zeta_i(x),
\zeta_i(x)\right>}{\|T(x)-x\|^{2}}
\end{equation}
 where $x\in H\backslash Fix
T$ and $\zeta_i(x)=S_{i}(x)-S_{i-1}(x)$. Furthermore, the step
size function
\begin{equation}\label{sigmamax}
%\begin{displaymath}
\sigma_{max}(x) = \left\{ \begin{array}{ll}
\displaystyle\frac{\sum^{E}_{t=1}w_{t}\sum^{m_{t}}_{i=1}
\left<U_{t}(x)-S_{i}(x)+\frac{1}{\alpha_{i}}\zeta_i(x),
\zeta_i(x)\right>}
{\|T(x)-x\|^{2}}, & x\in H\backslash Fix T \vspace{0.1cm}\\
1, & x\in Fix T
\end{array} \right.
%\end{displaymath}
\end{equation}
is bounded below by
$\frac{1}{\bar{m}_{t}}\left(\frac{1}{\gamma}-\frac{1}{2}\right).$
\end{theorem}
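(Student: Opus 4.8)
The plan is to prove the two assertions of Theorem \ref{lemma3} separately, with the bulk of the work reducing to an application of Lemma \ref{lemma:gam}. First I would recall from Definition \ref{def1} that $\mathrm{Fix}\,T_{\sigma,\lambda} = \mathrm{Fix}\,T = \bigcap_{i\in I}\mathrm{Fix}\,T_i$, so fix an arbitrary $z$ in this common fixed point set and an arbitrary $x\in H\setminus\mathrm{Fix}\,T$. Writing $T_{\sigma,\lambda}(x) = x + \lambda\sigma(x)(T(x)-x)$, the $\lambda$-relaxed cutter inequality \eqref{alpharelaxedcutter} that we must verify is $\lambda\langle \sigma(x)(T(x)-x) , z-x\rangle \ge \|\sigma(x)(T(x)-x)\|^2$, i.e. (dividing by $\sigma(x)>0$) it suffices to show
\begin{equation}\label{goal}
\langle T(x)-x, z-x\rangle \ge \sigma(x)\,\|T(x)-x\|^2 .
\end{equation}
This is exactly where the postulated upper bound on $\sigma(x)$ enters: the right-hand side of \eqref{strictsigma}, multiplied by $\|T(x)-x\|^2$, is precisely $\sum_{t=1}^E w_t\sum_{i=1}^{m_t}\langle U_t(x)-S_i(x)+\tfrac{1}{\alpha_i}\zeta_i(x),\zeta_i(x)\rangle$, so I need to identify this sum with the quantity $\sum_{t=1}^E\omega_t\sum_{i=1}^{m_t}\langle \tfrac{1}{\alpha_i}y^i + \sum_{j=i+1}^{m_t}y^j, y^i\rangle$ appearing on the right of \eqref{lemmaasli0}.

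The key bookkeeping step is to match notations: with $u^i = S_i(x)$ and $y^i = u^i - u^{i-1}$ from \eqref{notation}, we have $\zeta_i(x) = S_i(x)-S_{i-1}(x) = y^i$, and by \eqref{notation2} $U_t(x) - S_i(x) = \sum_{j=i+1}^{m_t} y^j$. Hence
\[
U_t(x)-S_i(x)+\tfrac{1}{\alpha_i}\zeta_i(x) = \tfrac{1}{\alpha_i}y^i + \sum_{j=i+1}^{m_t} y^j ,
\]
so the numerator of \eqref{strictsigma} coincides term-by-term with the right-hand side of \eqref{lemmaasli0}. Lemma \ref{lemma:gam} then gives $\langle z-x, T(x)-x\rangle \ge \sum_{t=1}^E\omega_t\sum_{i=1}^{m_t}\langle \tfrac{1}{\alpha_i}y^i + \sum_{j=i+1}^{m_t}y^j, y^i\rangle = \sigma_{max}(x)\,\|T(x)-x\|^2$, where $\sigma_{max}(x)$ is the function in \eqref{sigmamax} (note $\|T(x)-x\|^2 > 0$ on $H\setminus\mathrm{Fix}\,T$ since $\mathrm{Fix}\,T_{\sigma,\lambda}=\mathrm{Fix}\,T$). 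Since the hypothesis gives $0<\sigma(x)<\sigma_{max}(x)$, inequality \eqref{goal} follows a fortiori, and multiplying back through by $\lambda\sigma(x)\ge 0$ establishes \eqref{alpharelaxedcutter} for $T_{\sigma,\lambda}$; this proves $T_{\sigma,\lambda}$ is a $\lambda$-relaxed cutter.

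For the second assertion, I would again invoke Lemma \ref{lemma:gam}: the full chain of inequalities \eqref{lemmaasli0}–\eqref{lemmaasli} shows that for $x\in H\setminus\mathrm{Fix}\,T$,
\[
\sigma_{max}(x) \;=\; \frac{\langle z-x, T(x)-x\rangle_{\text{(lower bound)}}}{\|T(x)-x\|^2} \;\ge\; \frac{1}{\bar m_t}\Bigl(\frac{1}{\gamma}-\frac{1}{2}\Bigr),
\]
more precisely the numerator of \eqref{sigmamax} is bounded below by $\frac{1}{\bar m_t}(\frac{1}{\gamma}-\frac12)\|T(x)-x\|^2$ by \eqref{lemmaasli}, so after dividing by $\|T(x)-x\|^2$ the claimed bound holds on $H\setminus\mathrm{Fix}\,T$; on $\mathrm{Fix}\,T$ we have $\sigma_{max}(x)=1$, and since $\gamma\in(0,2)$ forces $\frac{1}{\bar m_t}(\frac{1}{\gamma}-\frac12) < \frac{1}{\bar m_t}\cdot\frac12 \le \frac12 < 1$, the bound holds there too. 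I do not anticipate a genuine obstacle here; the only thing to be careful about is the identification $\zeta_i(x)=y^i$ and $U_t(x)-S_i(x)=\sum_{j=i+1}^{m_t}y^j$ together with the harmless rescaling by the positive scalar $\sigma(x)$, plus checking that $\|T(x)-x\|^2\neq 0$ off $\mathrm{Fix}\,T$ so the division is legitimate. The "hard part", such as it is, is purely that Lemma \ref{lemma:gam} must be stated and proved in a form whose right-hand side matches \eqref{strictsigma} exactly — which the excerpt's Lemma \ref{lemma:gam} already does.
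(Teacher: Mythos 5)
Your proof follows essentially the same route as the paper's: reduce the relaxed-cutter inequality for $T_{\sigma,\lambda}$ to $\left<T(x)-x,z-x\right>\geq\sigma(x)\|T(x)-x\|^2$, identify the numerator of (\ref{strictsigma}) with the right-hand side of (\ref{lemmaasli0}) via $\zeta_i(x)=y^i$ and $U_t(x)-S_i(x)=\sum_{j=i+1}^{m_t}y^j$, invoke Lemma \ref{lemma:gam}, and read the lower bound on $\sigma_{max}$ off the chain (\ref{lemmaasli00})--(\ref{lemmaasli}). The substance is correct; your opening statement of the target inequality carries the wrong power of $\lambda$ (it should be $\left<T_{\sigma,\lambda}(x)-x,z-x\right>\geq\frac{1}{\lambda}\|T_{\sigma,\lambda}(x)-x\|^2$, which is what the paper derives and what your multiplication by $\lambda\sigma(x)$ actually produces), but this is a wording slip rather than a gap.

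One genuine error: in verifying the lower bound on $Fix\,T$ you assert that $\gamma\in(0,2)$ forces $\frac{1}{\bar m_t}\left(\frac{1}{\gamma}-\frac{1}{2}\right)<\frac{1}{\bar m_t}\cdot\frac{1}{2}$, which requires $\frac{1}{\gamma}<1$, i.e. $\gamma>1$; for $\gamma<1$ the quantity $\frac{1}{\gamma}-\frac{1}{2}$ exceeds $\frac{1}{2}$ and can be arbitrarily large, so with $\bar m_t=1$ and small $\gamma$ the claimed bound actually fails at points of $Fix\,T$ where $\sigma_{max}=1$. The paper's own proof establishes (and only uses) the bound for $x\in H\setminus Fix\,T$, so you should drop the $Fix\,T$ case rather than try to prove it.
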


\begin{proof}
For $z\in Fix~T_{\sigma,\lambda}$ and $x\in H\backslash
Fix~T_{\sigma,\lambda}$ one gets
\begin{eqnarray}
\left<z-x,T_{\sigma,\lambda}(x)-x\right>&=&\left<z-x,\lambda\sigma(x)(T(x)-x)\right>\nonumber\\
&=&\lambda\sigma(x)\left<z-x,T(x)-x\right>\nonumber\\
(\textrm{using }(\ref{lemmaasli0}))&\geq& \lambda\sigma(x)\sum_{t=1}^{E}\omega_{t}\sum_{i=1}^{m_{t}}\left<\frac{1}{\alpha_{i}}y^{i}+\sum_{j=i+1}^{m_{t}} y^j, y^{i}\right>\label{s1}\\
&=&\lambda\sigma(x)\displaystyle\sum^{E}_{t=1}w_{t}\sum^{m_{t}}_{i=1}
\left<U_{t}(x)-S_{i}(x)+\frac{1}{\alpha_{i}}\zeta_i(x),\zeta_i(x)\right>\label{s2}\\
(\textrm{using }(\ref{strictsigma}))&\geq&\frac{\lambda^{2}\sigma^{2}(x)}{\lambda}\|T(x)-x\|^2\\
&=&\frac{1}{\lambda}\|T_{\sigma,\lambda}(x)-x\|^2.
\end{eqnarray}
So $T_{\sigma,\lambda}$ is a $\lambda$-relaxed cutter operator.
Using (\ref{lemm2}), (\ref{dec5}), (\ref{s1}) and (\ref{s2}), the
lower bound of $\sigma_{max}$ is
\begin{eqnarray}
\sigma_{max}(x) &\geq& \frac{\sum_{t=1}^{E}\omega_{t}\left(\frac{1}{\bar{\alpha}_{t}}-\frac{1}{2}\right) \sum_{i=1}^{m_{t}}\|y^{i}\|^2}{\|T(x)-x\|^2}\\
&\geq&\frac{\frac{1}{\bar{m}_{t}}\left(\frac{1}{\gamma}-\frac{1}{2}\right)\|T(x)-x\|^2}{\|T(x)-x\|^2}\\
&\geq&
\frac{1}{\bar{m}_{t}}\left(\frac{1}{\gamma}-\frac{1}{2}\right)
 \end{eqnarray}
where $x\in H\backslash Fix T.$ It completes the proof.
\end{proof}

\begin{corollary}\label{col}
$T_{\sigma,\lambda} - Id$ is demi-closed at $0$ assuming that
$T-Id$ is demi-closed at $0$. This happens, for example, when
$T_{i}-Id$ is demi-closed at $0$ for $i=1,\ldots,m$, or $U_{t}-Id$
is demi-closed at $0$ for $t=1,\ldots,E$.
\end{corollary}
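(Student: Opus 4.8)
The plan is to split the argument into two independent pieces, mirroring the two sentences of the statement, and then combine them.

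First I would transfer demi-closedness from $T-Id$ to $T_{\sigma,\lambda}-Id$. The structural fact to exploit is the identity $T_{\sigma,\lambda}(x)-x=\lambda\sigma(x)(T(x)-x)$ from Definition \ref{def1}, together with $\lambda>0$ and $\sigma(x)>0$; in particular the two operators have the same zero set (this is $Fix\,T_{\sigma,\lambda}=Fix\,T$). So let $x^{k}\rightharpoonup y$ with $(T_{\sigma,\lambda}-Id)(x^{k})\to0$. Since under our standing assumptions the step size function is bounded below by a positive constant $\underline{\sigma}$ (recall that $\sigma_{max}$ itself is bounded below by $\frac{1}{\bar{m}_{t}}\big(\frac1\gamma-\frac12\big)$ by Theorem \ref{lemma3}), we get $\|T(x^{k})-x^{k}\|\le(\lambda\underline{\sigma})^{-1}\|(T_{\sigma,\lambda}-Id)(x^{k})\|\to0$, i.e. $(T-Id)(x^{k})\to0$. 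Demi-closedness of $T-Id$ at $0$ then gives $T(y)=y$, and evaluating the identity at $x=y$ yields $(T_{\sigma,\lambda}-Id)(y)=\lambda\sigma(y)(T(y)-y)=0$, which is what is needed.

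Second, I would justify the two sufficient conditions for $T-Id$ to be demi-closed at $0$. Each $U_{t}=T_{i^{t}_{m_{t}}}\cdots T_{i^{t}_{1}}$ is a composition of the strictly relaxed cutter operators $T_{i}$, and $T=\sum_{t=1}^{E}\omega_{t}U_{t}$ is a convex combination of the $U_{t}$, which are themselves strictly relaxed cutters by Remark \ref{rem3}; moreover $\bigcap_{t=1}^{E}Fix\,U_{t}\supseteq\bigcap_{i\in I}Fix\,T_{i}\neq\emptyset$ by Theorem \ref{relaxedcutter}, so every fixed-point intersection used below is nonempty. If $\{T_{i}-Id\}_{i=1}^{m}$ are demi-closed at $0$, the composition part of Remark \ref{demi_closeness} shows each $U_{t}-Id$ is demi-closed at $0$; and once $\{U_{t}-Id\}_{t=1}^{E}$ are demi-closed at $0$ — whether assumed directly or obtained from the previous sentence — the convex-combination part of Remark \ref{demi_closeness} shows $T-Id$ is demi-closed at $0$. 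Feeding this into the first piece completes the argument.

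The only delicate point I anticipate is the implication $(T_{\sigma,\lambda}-Id)(x^{k})\to0\Rightarrow(T-Id)(x^{k})\to0$ in the first piece: it genuinely requires that $\sigma$ cannot collapse to $0$ along the sequence, so I would make the lower bound on $\sigma$ explicit there (in the extrapolation regime $\lambda\sigma\ge1$ it is immediate, since then $\|T(x^{k})-x^{k}\|\le\|(T_{\sigma,\lambda}-Id)(x^{k})\|$). Everything else is a direct appeal to Definition \ref{def1}, Remark \ref{rem3}, Theorem \ref{relaxedcutter} and Remark \ref{demi_closeness}, with only the routine check that the relevant fixed-point intersections are nonempty.
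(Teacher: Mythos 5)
Your proposal is correct and follows essentially the same route as the paper: the paper's own proof also transfers demi-closedness via the identity $\|T_{\sigma,\lambda}(x)-x\|=\lambda\sigma(x)\|T(x)-x\|\geq\lambda\frac{1}{\bar{m}_{t}}\left(\frac{1}{\gamma}-\frac{1}{2}\right)\|T(x)-x\|$, i.e. the lower bound on $\sigma_{max}$ from Theorem \ref{lemma3}, and then appeals to Remark \ref{demi_closeness} for the two sufficient conditions. Your explicit flagging of the need for $\sigma$ to be bounded away from zero is exactly the point the paper's one-line estimate relies on.
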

The proof of Corollary \ref{col} follows immediately by Remark
\ref{demi_closeness} and the estimate established in Theorem
\ref{lemma3} , that is, we have
\begin{equation*}
\|T_{\sigma,\lambda}(x)-x\|=\sigma(x)\lambda\|T(x)-x\|\geq
\lambda\frac{1}{\bar{m}_{t}}\left(\frac{1}{\gamma}-\frac{1}{2}\right)\|T(x)-x\|.
\end{equation*}
\begin{theorem}\label{theorem2}
Let $\sigma=\sigma_{max}$ be step size function, $\bigcap_{i\in I}
FixT_{i}\neq \emptyset$ and
$\lambda_{k}\in[\varepsilon,2-\varepsilon]$ for an arbitrary
constant $\varepsilon\in(0,1)$. The sequence generated by
Algorithm \ref{algt1}  converges weakly to a point in $Fix T$, if
one of the following conditions is satisfied
\begin{itemize}
\item[($i$)] $T-Id$ is demi-closed at $0$, or
\item[($ii$)] $U_{t} -Id$ are demi-closed at $0$, for all $t=1, 2, \cdots, E$, or
\item[($iii$)] $T_{i} -Id$ are demi-closed at $0$, for all $i=1, 2, \cdots, m$.
\end{itemize}
\end{theorem}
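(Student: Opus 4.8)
The plan is to run the standard Fej\'er-monotonicity argument for relaxed-cutter iterations, feeding in the two facts already established in Theorem~\ref{lemma3}: that $T_{\sigma,\lambda}$ is a $\lambda$-relaxed cutter and that $\sigma_{max}$ is bounded below by a positive constant. Before that, I would reduce conditions $(ii)$ and $(iii)$ to $(i)$. Since $\bigcup_{t=1}^{E}I^{t}=I$ we have $\bigcap_{t=1}^{E}Fix\,U_{t}=\bigcap_{i\in I}Fix\,T_{i}\neq\emptyset$, and by Remark~\ref{rem3} each $T_{i}$ and each $U_{t}$ is a strictly relaxed cutter; hence Remark~\ref{demi_closeness} applied to the composition $U_{t}=T_{i^{t}_{m_{t}}}\cdots T_{i^{t}_{1}}$ gives $(iii)\Rightarrow(ii)$, and applied to the convex combination $T=\sum_{t}\omega_{t}U_{t}$ it gives $(ii)\Rightarrow(i)$. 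So it suffices to treat case $(i)$, i.e. $T-Id$ demi-closed at $0$.

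Next I would establish Fej\'er monotonicity of the sequence $(x^{k})$ produced by Algorithm~\ref{algt1} with respect to $Fix\,T=Fix\,T_{\sigma,\lambda_{k}}$. By Theorem~\ref{lemma3} (whose chain of inequalities remains valid, with equality at the step controlled by $\sigma_{max}$, for the boundary choice $\sigma=\sigma_{max}$) we have $\langle z-x^{k},T_{\sigma,\lambda_{k}}(x^{k})-x^{k}\rangle\ge\tfrac1{\lambda_{k}}\|T_{\sigma,\lambda_{k}}(x^{k})-x^{k}\|^{2}$ for every $z\in Fix\,T$. Expanding $\|x^{k+1}-z\|^{2}=\|x^{k}-z\|^{2}+\|x^{k+1}-x^{k}\|^{2}-2\langle x^{k+1}-x^{k},z-x^{k}\rangle$, substituting this inequality, and using $\lambda_{k}\in[\varepsilon,2-\varepsilon]$ yields
\[
\|x^{k+1}-z\|^{2}\le\|x^{k}-z\|^{2}-\frac{2-\lambda_{k}}{\lambda_{k}}\|x^{k+1}-x^{k}\|^{2}\le\|x^{k}-z\|^{2}-\frac{\varepsilon}{2-\varepsilon}\|x^{k+1}-x^{k}\|^{2}
\]
(equivalently, Theorem~\ref{cuttertosqne} applied in place). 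Consequently $(\|x^{k}-z\|)_{k}$ is nonincreasing, hence convergent, so $(x^{k})$ is bounded; telescoping the last display gives $\sum_{k}\|x^{k+1}-x^{k}\|^{2}<\infty$, so $\|x^{k+1}-x^{k}\|\to0$.

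Then I would deduce $\|T(x^{k})-x^{k}\|\to0$ and upgrade to weak convergence. If $x^{k}\in Fix\,T$ for some $k$ the iteration is stationary and there is nothing to prove, so assume $x^{k}\notin Fix\,T$ for all $k$; then $\sigma(x^{k})=\sigma_{max}(x^{k})\ge\tfrac1{\bar m_{t}}\bigl(\tfrac1\gamma-\tfrac12\bigr)>0$ by Theorem~\ref{lemma3}, positivity following from $\gamma=\max_{1\le t\le E}\bar\alpha_{t}<2$. Hence
\[
\|x^{k+1}-x^{k}\|=\lambda_{k}\sigma(x^{k})\,\|T(x^{k})-x^{k}\|\ge\frac{\varepsilon}{\bar m_{t}}\Bigl(\frac1\gamma-\frac12\Bigr)\|T(x^{k})-x^{k}\|,
\]
so $\|T(x^{k})-x^{k}\|\to0$. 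Since $(x^{k})$ is bounded it has a subsequence $x^{k_{j}}\rightharpoonup y$, and demi-closedness of $T-Id$ at $0$ forces $(T-Id)(y)=0$, i.e. $y\in Fix\,T$. Finally, a Fej\'er monotone sequence whose weak cluster points all lie in $Fix\,T$ converges weakly to a point of $Fix\,T$: if $y_{1},y_{2}$ are two such cluster points, then $\lim_{k}\|x^{k}-y_{i}\|$ exist, and passing to the limit in $\|x^{k}-y_{1}\|^{2}-\|x^{k}-y_{2}\|^{2}=2\langle x^{k},y_{2}-y_{1}\rangle+\|y_{1}\|^{2}-\|y_{2}\|^{2}$ along the two defining subsequences gives $\|y_{1}-y_{2}\|^{2}=0$.

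The main difficulty is bookkeeping rather than analysis. One must verify that the $\lambda_{k}$-relaxed cutter property of Theorem~\ref{lemma3} survives the boundary choice $\sigma=\sigma_{max}$ and yields a quadratic descent estimate for the \emph{exact} iterate $x^{k+1}=T_{\sigma,\lambda_{k}}(x^{k})$ (with no further relaxation applied), that the constant $\tfrac1{\bar m_{t}}(\tfrac1\gamma-\tfrac12)$ is genuinely positive, and---most delicately---that demi-closedness can be transported from the $T_{i}$ (or the $U_{t}$) up to $T$ through Remark~\ref{demi_closeness}; this last point is exactly where the hypothesis that the $T_{i}$ are strictly relaxed cutters, rather than merely strictly quasi-nonexpansive, is used.
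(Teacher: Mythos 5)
Your proposal is correct and follows essentially the same route as the paper: the descent inequality you derive from Theorem~\ref{lemma3} is identical to the paper's (\ref{t10})--(\ref{demiT}) once you substitute $\|x^{k+1}-x^{k}\|=\lambda_{k}\sigma(x^{k})\|T(x^{k})-x^{k}\|$, and the identification of weak cluster points via Remark~\ref{demi_closeness} and demi-closedness of $T-Id$ is exactly the paper's closing step. The only cosmetic differences are that you telescope $\sum_{k}\|x^{k+1}-x^{k}\|^{2}$ and prove the uniqueness of the weak limit by hand, whereas the paper reads $\|T(x^{k})-x^{k}\|\to 0$ directly off (\ref{demiT}) and cites \cite[Theorem 2.16 (ii)]{BB}.
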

\begin{proof}
For $z\in Fix T, x^k\in H\backslash Fix T$ one gets
\begin{eqnarray}\label{aslih2}
e_{k+1}^2 &=& \|T_{\sigma,\lambda_k}(x^k)-z\|^2\nonumber\\
&=&\|x^{k}+\lambda_{k}\sigma(x^k)(T(x^k)-x^{k})-z\|^2\nonumber\\
&=& \|x^{k}+\lambda_{k}(T_{\sigma}(x^k)-x^{k})-z\|^2\nonumber\\
&=&e_{k}^2 +\xi_k +\lambda^{2}_{k}\|T_{\sigma}(x^k)-x^{k}\|^2
\end{eqnarray}
where $\xi_k=2 \lambda_{k}\left<x^k
-z,T_{\sigma}(x^k)-x^{k}\right>$ and $e_k=\|x^k-z\|.$ Using
Theorem \ref{lemma3}, we obtain that
$$\left<x^k
-z,T_{\sigma}(x^k)-x^{k}\right>\leq -
\|T_{\sigma}(x^{k})-x^{k}\|^2$$ and consequently
\begin{equation}\label{aslih22}
\xi_{k}\leq -2\lambda_{k}\|T_{\sigma}(x^k)-x^{k}\|^2.
\end{equation}
Using (\ref{aslih22}) and Theorem \ref{lemma3} we conclude
\begin{eqnarray}
e_{k+1}^2 &\leq&\|x^{k}-z\|^2 -2\lambda_{k}\|T_{\sigma}(x^k)-x^{k}\|^2+\lambda^{2}_{k}\|T_{\sigma}(x^k)-x^{k}\|^2\nonumber\\
&=&e_{k}^2-\lambda_{k}(2-\lambda_{k})\sigma^{2}(x^k)\|T(x^k)-x^{k}\|^2 \label{t10}\\
&\leq& e_{k}^2-
\frac{\lambda_{k}(2-\lambda_{k})}{\bar{m_{t}}^{2}}\left(\frac{1}{\gamma}-\frac{1}{2}\right)^{2}\|T(x^k)-x^{k}\|^2.
\label{demiT}
\end{eqnarray}
Therefore, we get that $\left\{e_k\right\}$ decreases and
consequently $\left\{x^{k}\right\}$ is bounded and
\begin{equation}\label{demiclosedT}
\|T(x^k)-x^k\|\rightarrow 0.
\end{equation}
Using Fej{\'e}r monotonicity of $\{x^{k}\}$ and \cite[Theorem 2.16
(ii)]{BB}, it suffices to show that every weak cluster point
$x^{*}$ of $\{x^{k}\}$ lies in $FixT$. To this end we assume that
$\{x^{n_{k}}\}$ is a subsequence of $\{x^{k}\}$ which converges
weakly to some point $x^{*} \in H$. In view of Remark
\ref{demi_closeness}, the operator $T -Id$ is demi-closed at $0$
if any of conditions ($i$), ($ii$) and ($iii$) is satisfied. Thus
the fact (\ref{demiclosedT}) and demi-closeness of $T-Id$ at $0$
implies that $x^{*}\in FixT$, which proves that $\{x^{k}\}$
converges weakly to some point in $FixT$.
\end{proof}

\begin{remark}
The following example shows that distinguishing between ($i$),
($ii$) and ($iii$) in Theorem \ref{theorem2} is necessary.
%Ignoring one of the assumptions will restrict the applications of
%the theorem. Following is an example that shows distinguishing
%between assumptions  in Theorem \ref{theorem2} is necessary.
Let $U(x)=T_{2}T_{1}(x)$ where
\begin{equation*}\label{T1}
%\begin{displaymath}
T_{1}(x) = \left\{ \begin{array}{ll}
\displaystyle 1 + \frac{1}{2}\left(\frac{1}{n}+\frac{1}{n+1}\right), &~~~~ x=1+\frac{1}{n}\vspace{0.1cm}\\
\frac{1}{2}x, &~~~~otherwise
\end{array} \right.
%\end{displaymath}
\end{equation*}
and
\begin{equation*}\label{T2}
%\begin{displaymath}
T_{2}(x) = \left\{ \begin{array}{ll}
\displaystyle 1 + \frac{\sqrt{2}}{2}\left(\frac{1}{n}+\frac{1}{n+1}\right), &~~~~ x=1+\frac{\sqrt{2}}{n}\vspace{0.1cm}\\
\frac{1}{2}x, &~~~~otherwise.
\end{array} \right.
%\end{displaymath}
\end{equation*}

It is easy to check that $T_{1}$ and $T_{2}$ are strictly relaxed
cutter operators and consequently $U$ is a strictly relaxed cutter
operator. Furthermore, we have $Fix U=FixT_{1}\bigcap
FixT_{2}=\{0\}$. Assuming $x^{n}=1+\frac{1}{n},$ we get that
$\lim_{n\to \infty}\|T_{1}(x^{n})-x^{n}\|=0.$ Since $\lim_{n\to
\infty}x^{n}=1\notin FixT_{1},$ we conclude that $T_{1}-Id$ is not
demi-closed at $0.$ Similarly, considering
$x^{n}=1+\frac{\sqrt{2}}{n}$ gives that $T_{2}-Id$ is not
demi-closed at $0.$ Therefore, assuming only the case ($iii$) in
Theorem \ref{theorem2} does not guaranty the convergence results.
We next verify that $U-Id$ is demi-closed at $0$ which shows the
case ($ii$) is satisfied. We have
\begin{equation*}\label{T2T1}
%\begin{displaymath}
U(x) -x = \left\{ \begin{array}{ll}
\displaystyle -\frac{1}{2} -\frac{3}{4n} +\frac{1}{4(n+1)}, &~~~~ x=1+\frac{1}{n}\vspace{0.1cm}\\
\displaystyle -1 - \frac{3\sqrt{2}}{4n}+\frac{\sqrt{2}}{2(n+1)}, &~~~~ x=2+\frac{2\sqrt{2}}{n}\vspace{0.1cm}\\
\displaystyle -\frac{3}{4}x, &~~~~otherwise
\end{array} \right.
%\end{displaymath}
\end{equation*}
and consequently $\lim_{n\to \infty}\|U(x^{n})-x^{n}\|=0$ if and
only if $\lim_{n\to \infty}x^{n}=0.$ Therefore, we get that $0 \in
Fix U$ and $U-Id$ is demi-closed at $0.$
\end{remark}

\subsection{Error reduction}\label{disc}
We now compare results of \cite[theorem 9]{CC}, \cite[Theorem
4.9.1]{C2013} and \cite[Theorem 14]{NM2014} with Theorem
\ref{theorem2}. Since the result of \cite[theorem 9]{CC} is based
on one string, we here assume $E=1,$ see Definition
\ref{tarifstring}. Let $e_{k}=\|x^{k}-z\|$ for $k\geq 0$ and $z\in
FixT$. Using (\ref{demiT}), we obtain
\begin{equation}\label{errorreduction}
e_k^2-e_{k+1}^2\geq
\frac{\lambda_{k}(2-\lambda_{k})}{m^2}\left(\frac{1}{\gamma}-\frac{1}{2}\right)^{2}\|T(x^k)-x^{k}\|^2
\end{equation}
which shows how much is big the difference of successive squared
errors. Indeed bigger right hand side in (\ref{errorreduction})
gives faster decay in error. Therefore, the minimum reduction of
error, assuming $\lambda_k=1$ in (\ref{errorreduction}), is
\begin{equation}\label{errorrelaxedcutter}
\frac{\left(\frac{1}{\gamma}-\frac{1}{2}\right)^{2}}{m^{2}}\|T(x^k)-x^{k}\|^2
\end{equation}
whereas this value in \cite[theorem9]{CC} is
\begin{equation}\label{errorcutter}
\frac{1}{4m^{2}}\|T(x^k)-x^{k}\|^2.
\end{equation}

Therefore, using $\alpha$-relaxed cutter operators, with proper
$\alpha,$ leads to have faster decay in error than using
$1$-relaxed cutter operators.

Setting $\{\alpha_{i}\}_{i=1}^m=1$ in Algorithm \ref{algt1}, i.e.
assuming all operators $T_{i\in I}$ are $1$-relaxed cutter, leads
to $\gamma=\max_{1\leq i\leq m}\alpha_i=1.$ Therefore, we obtain
that both lower bounds (\ref{errorrelaxedcutter}) and (\ref{errorcutter}) are equal.

\begin{remark}\label{rem6}
For simplicity assume that $\{\alpha_{i}\}_{i=1}^m=\alpha.$ Since
$\|T(x^k)-x^{k}\|^2$ converges to zero, we conclude that the
quantity (\ref{errorrelaxedcutter}) takes a large value if
$\alpha$ is a small number. Furthermore, using small value for
$\alpha$ diminishes the effect of each operator $T_i$ of
Definition \ref{tarifstring}. Therefore, we are not able to select
proper relaxation parameters only based on
(\ref{errorrelaxedcutter}).
\end{remark}

To compare Theorem \ref{theorem2} with extrapolated simultaneous
cutter operator which is analyzed  in \cite[Theorem 4.9.1]{C2013},
we assume $E=m$ and $m_{t}=1$ for $t=1,\ldots,E$ in Definition
\ref{tarifstring}. In this case, we have
\begin{equation}
 e_k^2-e_{k+1}^2\geq \lambda_{k}(2-\lambda_{k})\left(\frac{1}{\gamma}-\frac{1}{2}\right)^{2}\|T(x^k)-x^{k}\|^2.
\end{equation}
Assuming $\lambda_{k}=1$, the difference of successive squared
errors is
\begin{equation}\label{Sim_relax}
\displaystyle\left(\frac{1}{\gamma}-\frac{1}{2}\right)^{2}\|T(x^k)-x^{k}\|^2,
\end{equation}
whereas this value in \cite[Theorem 4.9.1]{C2013} is estimated by
\begin{equation}\label{SIM_cutter}
\|T(x^k)-x^{k}\|^2.
\end{equation}
Therefore, choosing proper $\alpha$-relaxed cutter operators leads
to have faster reduction of error than $1$-relaxed cutter
operators.

Comparing Theorem \ref{theorem2} with results in \cite{NM2014}, it
should be mentioned that the analysis in \cite[Theorem 14]{NM2014}
are based on sQNE operators and the relaxation parameters
$\lambda_k$ lie in $[\varepsilon,1-\varepsilon]$.  Furthermore,
their convergence analysis is valid if at least $T-Id$ or
$\{U_t-Id\}_{t=1}^E$ is demi-closed at zero whereas in Theorem
\ref{theorem2} the demi-closedness of $\{T_{i}-Id\}_{i=1}^m$ gives
the convergence result too. As it is seen in Theorem \ref{lemma3},
the step-size function depends on $\|y^{i}\|$ and $\|U_{t}(x)-x\|$
for $i=1,\ldots,m_{t}$ and $t=1,\ldots,E.$ It allows us to use
demi-closeness property of $T_{i}-Id$ and $U_{t}-Id$ in Theorem
\ref{theorem2}. However, the step-size function in \cite{NM2014}
is defined as
\begin{equation*}
\sigma_{max}(x)=\frac{\sum_{t=1}^{E}\omega_{t}\|U_{t}(x)-x\|^{2}}{\|T(x)-x\|^{2}}.
\end{equation*}
It only depends on the $\|U_{t}(x)-x\|$ for $t=1,\ldots,E$. Since
assuming the demi-closeness of $T_{i}-Id$ does not give any
convergence result, this property is not assumed in \cite[Theorem
14]{NM2014}. Therefore, to compare Theorem \ref{theorem2} with the
results in \cite{NM2014} in the equal conditions, we do not
consider demi-closeness of $T_{i}-Id$ for $i=1,\ldots,m$. In this
case, Theorem \ref{relaxedcutter} and Lemma \ref{lemma:gam} lead
to the following results
\begin{align*}
\left<z-x, T(x)-x\right>&=\sum_{t=1}^{E}\omega_{t}\left<z-x, U_{t}(x)-x\right>\\
&\geq\sum_{t=1}^{E}\frac{\omega_{t}}{\gamma_{m_{t}}}\|U_{t}(x)-x\|^{2}\\
&\geq\sum_{t=1}^{E}\frac{\omega_{t}}{\gamma_{m_{t}}}\|U_{t}(x)-x\|^{2}\\
&\geq\frac{1}{\bar{\gamma}}\|T(x)-x\|^{2},\\
\end{align*}
where $\bar{\gamma}=\max_{t=1,\ldots,E} \gamma_{m_{t}}$. Therefore
the step-size function in Theorem \ref{lemma3} can be rewritten as
\begin{equation}\label{New:sigma}
\sigma_{max}=\frac{\sum_{t=1}^{E}\frac{\omega_{t}}{\gamma_{m_{t}}}\|U_{t}(x)-x\|^{2}}{\|T(x)-x\|^{2}}
\end{equation}
which is bounded below by $\frac{1}{\bar{\gamma}}$. If we use
step-size defined in (\ref{New:sigma}) in Theorem \ref{theorem2},
we have
\begin{equation}
e_k^2-e_{k+1}^2\geq
\lambda_{k}(2-\lambda_{k})\frac{1}{\bar{\gamma}^{2}}\|T(x^k)-x^{k}\|^2.
\end{equation}
Assuming $\lambda_{k}=1$, the difference of successive squared
errors is
\begin{equation}\label{new:upper}
\frac{1}{\bar{\gamma}^{2}}\|T(x^k)-x^{k}\|^2,
\end{equation}
whereas this value in \cite[Theorem 14]{NM2014} is estimated
\begin{equation}\label{sQNE_upper}
 \frac{1}{4}\|T(x^k)-x^{k}\|^2.
\end{equation}
Since $\bar{\gamma}<2$, we conclude that for every relaxed cutter
operator we have faster reduction of error comparing with
\cite{NM2014}.

At the end, consider again $E=1$ and $m_{t}=m$ in Definition
\ref{tarifstring} and let $T_{i\in I}$ are $1$-relaxed cutter
operators. If we assume that  $T_{i}-Id$ are demi-closed at $0$
then the minimum reduction of error for generalized relaxation of
cutter operators is derived in (\ref{errorcutter}) whereas this
value without assuming the demi-closeness property of $T_{i}-Id$
is estimated in (\ref{new:upper}) and (\ref{sQNE_upper}) for
strictly relaxed cutter and sQNE operators, respectively.
Therefore the generalized relaxation of cutter operators was
inherently able to have faster reduction of error.
\subsection{Constraints}\label{const}
In this section we consider the projected version of Algorithm
\ref{algt1} for a general Hilbert space and for finite dimensional
Euclidean space, i.e.,  $\mathbb{R}^n.$

Let $\Omega$ be a closed convex subset of $H$ and $Fix
T_{\sigma,\lambda}\cap \Omega\ne \emptyset.$ Since $\sigma$ is far
from zero, see Theorem \ref{lemma3}, we conclude that $Fix
T_{\sigma,\lambda}=Fix T.$ Therefore we assume $Fix T\cap
\Omega\ne \emptyset.$  We now consider projected version of
Algorithm \ref{algt1} with constant relaxation parameter
$\lambda_k=\lambda$ as follows.
\begin{algorithm}\label{pro-algt1}
\emph{\\Initialization:} $x^{0}\in H$ is arbitrary. \newline
\emph{Iterative Step:} Given $x^{k},$ compute
\begin{eqnarray}
x^{k+1}=P_{\Omega}T_{\sigma,\lambda}(x^k)\nonumber
\end{eqnarray}
where $P_{\Omega}$ denotes the orthogonal projection onto
$\Omega.$
\end{algorithm}

Using Theorem \ref{lemma3} and Theorem \ref{relaxedcutter}, we
conclude that $P_{\Omega}T_{\sigma,\lambda}$ is an $\eta$-relaxed
cutter operator. Based on Theorem \ref{cuttertosqne},
$P_{\Omega}T_{\sigma,\lambda}$ is an $\frac{2-\eta}{\eta}$-SQNE.
On the other hand, based on \cite[Theorem 3.4.3]{C2013} any SQNE
operator is asymptotically regular, namely,
\begin{equation}\label{asym}
\lim_{k\rightarrow\infty}\|P_{\Omega}T_{\sigma,\lambda}x^k-x^k\|=0.
\end{equation}
If we assume that $P_{\Omega}T_{\sigma,\lambda}-Id$ is demi-closed
at $0$ then, using the similar arguments to those in the proof of
Theorem \ref{theorem2}, in particular, by \cite[Theorem 2.16
(ii)]{BB} and \cite[Theorem 2.1.26]{C2013}, the sequence
$\{x^{k}\}$ generated by Algorithm \ref{pro-algt1} converges
weakly to a point $x^{*}\in Fix
P_{\Omega}T_{\sigma,\lambda}=\Omega\cap FixT$. By applying Remark
\ref{demi_closeness} and in view of Remark \ref{rem2}, it is not
difficult to see that $P_{\Omega}T_{\sigma,\lambda}-Id$ is
demi-closed at $0$ if either $T-Id$, $U_{t}-Id$, $t=1,\ldots,E$ or
$T_{i}-Id$, $i=1,\ldots,m$ is demi-closed at $0$. We summarize the
above results as follows.
\begin{theorem}\label{propo1}
Let $\sigma=\sigma_{max}$ be step size function, $\bigcap_{i\in
I}FixT_{i}\neq \emptyset$ and
$\lambda\in[\varepsilon,2-\varepsilon]$ for an arbitrary constant
$\varepsilon\in(0,1)$. Assume that $\Omega\cap Fix T\ne\emptyset$.
The sequence generated by Algorithm \ref{pro-algt1} converges
weakly to a point in $Fix T\cap\Omega$, if one of the following
conditions is satisfied
\begin{itemize}
\item[($i$)] $T-Id$ is demi-closed at $0$, or
\item[($ii$)] $U_{t} -Id$ are demi-closed at $0$, for all $t=1, 2, \cdots, E$, or
\item[($iii$)] $T_{i} -Id$ are demi-closed at $0$, for all $i=1, 2, \cdots, m$.
\end{itemize}
\end{theorem}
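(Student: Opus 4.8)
The plan is to reduce Theorem~\ref{propo1} to the already-established machinery of Theorem~\ref{theorem2}, the main change being that the iteration operator is now $P_\Omega T_{\sigma,\lambda}$ rather than $T_{\sigma,\lambda}$. First I would observe that, by Theorem~\ref{lemma3}, $T_{\sigma,\lambda}$ is a $\lambda$-relaxed cutter operator whenever $\sigma=\sigma_{max}$ and $x\notin Fix\,T$; since $\lambda\in[\varepsilon,2-\varepsilon]\subset(0,2)$, it is in fact a \emph{strictly} relaxed cutter operator, and $Fix\,T_{\sigma,\lambda}=Fix\,T$ for $\lambda\neq 0$. The orthogonal projection $P_\Omega$ onto the closed convex set $\Omega$ is a cutter operator (hence a $1$-relaxed, strictly relaxed cutter), with $Fix\,P_\Omega=\Omega$. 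By the composition part of Theorem~\ref{relaxedcutter}, the composition $P_\Omega T_{\sigma,\lambda}$ is therefore an $\eta$-relaxed cutter operator for a suitable $\eta\in(0,2)$, with $Fix\,(P_\Omega T_{\sigma,\lambda})=Fix\,T_{\sigma,\lambda}\cap Fix\,P_\Omega=Fix\,T\cap\Omega$, which is nonempty by hypothesis.

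Next I would invoke Theorem~\ref{cuttertosqne} to convert this into a strong quasi-nonexpansivity statement: an $\eta$-relaxed cutter with $\eta\in(0,2)$ is $\tfrac{2-\eta}{\eta}$-SQNE, hence SQNE. Then, by \cite[Theorem 3.4.3]{C2013}, every SQNE operator is asymptotically regular, which yields
\begin{equation*}
\lim_{k\rightarrow\infty}\|P_\Omega T_{\sigma,\lambda}(x^k)-x^k\|=0,
\end{equation*}
i.e., $(P_\Omega T_{\sigma,\lambda})(x^k)-x^k\rightarrow 0$ along the iterates of Algorithm~\ref{pro-algt1}. Simultaneously, the SQNE inequality applied with any $z\in Fix\,T\cap\Omega$ shows that $\{\|x^k-z\|\}$ is nonincreasing, so $\{x^k\}$ is Fej\'er monotone with respect to $Fix\,T\cap\Omega$ and in particular bounded, so it has weak cluster points.

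The remaining point, and the only genuinely substantive one, is the demi-closedness of $P_\Omega T_{\sigma,\lambda}-Id$ at $0$. Here I would argue as follows: $P_\Omega-Id$ is demi-closed at $0$ because $P_\Omega$ is nonexpansive (Remark~\ref{rem2}), and $T_{\sigma,\lambda}-Id$ is demi-closed at $0$ by Corollary~\ref{col} under any of the three hypotheses (i), (ii), (iii)---this is exactly where the three cases enter, via Remark~\ref{demi_closeness} which propagates demi-closedness of $T_i-Id$ through compositions $U_t$ and convex combinations to $T$, and then Corollary~\ref{col} passes it to $T_{\sigma,\lambda}$. Demi-closedness of the composition $P_\Omega T_{\sigma,\lambda}-Id$ then follows from \cite[Theorem 4.1]{Ceg2015} applied to strictly relaxed cutter operators whose displacement maps are demi-closed at $0$ (exactly the argument of Remark~\ref{demi_closeness}, now with $m=2$ and the two operators $T_{\sigma,\lambda}$, $P_\Omega$). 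With demi-closedness in hand, a standard subsequence argument finishes the proof: if $x^{n_k}\rightharpoonup x^*$, then asymptotic regularity plus demi-closedness force $(P_\Omega T_{\sigma,\lambda})(x^*)=x^*$, i.e.\ $x^*\in Fix\,T\cap\Omega$; combining this with Fej\'er monotonicity and \cite[Theorem 2.16(ii)]{BB} gives weak convergence of the whole sequence $\{x^k\}$ to a point of $Fix\,T\cap\Omega$. The main obstacle is bookkeeping the demi-closedness of the composition correctly under each of (i)--(iii); everything else is a direct transcription of the proof of Theorem~\ref{theorem2} with $T_{\sigma,\lambda}$ replaced by $P_\Omega T_{\sigma,\lambda}$.
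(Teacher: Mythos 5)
Your proposal is correct and follows essentially the same route as the paper: the paper also establishes that $P_{\Omega}T_{\sigma,\lambda}$ is an $\eta$-relaxed cutter via Theorem~\ref{lemma3} and Theorem~\ref{relaxedcutter}, converts this to the SQNE property via Theorem~\ref{cuttertosqne}, obtains asymptotic regularity from \cite[Theorem 3.4.3]{C2013}, derives demi-closedness of $P_{\Omega}T_{\sigma,\lambda}-Id$ from Remark~\ref{demi_closeness} together with Remark~\ref{rem2} under each of (i)--(iii), and concludes weak convergence by Fej\'er monotonicity and \cite[Theorem 2.16 (ii)]{BB}. Your slightly more explicit bookkeeping of the demi-closedness of the composition (via Corollary~\ref{col} and the $m=2$ case of the composition argument) is just a spelled-out version of what the paper leaves to the reader.
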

We now consider the case $H=\mathbb{R}^n.$ We assume that every
operator $T_{i\in I}$, is continuous and strictly relaxed cutter.
In this case we don't need to explicitly assume that $T_{i}-Id$,
$U_{t}-Id$ or $T-Id$ is demi-closed at zero. Indeed, the
continuity of $T_{i\in I}$ implies that all of the above-mentioned
operators are demi-closed at zero. Thus, we have the following
corollary.

\begin{corollary}\label{propo2}
Let $\sigma=\sigma_{max}$ be step size function, $\bigcap_{i\in
I}FixT_{i}\neq \emptyset$  and
$\lambda\in[\varepsilon,2-\varepsilon]$ for an arbitrary constant
$\varepsilon\in(0,1)$. Assume that $\Omega\cap Fix T\ne\emptyset$
and $\{T_i\}_{i\in I}$ are continuous and strictly relaxed cutter
operators. The generated sequence of Algorithm \ref{pro-algt1},
where $H=\mathbb{R}^n$  converges to a point in $Fix T\cap\Omega.$
\end{corollary}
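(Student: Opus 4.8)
The plan is to derive this statement as a direct specialization of Theorem \ref{propo1} to the finite-dimensional setting, the only real work being to verify that the continuity hypothesis on the $T_i$ automatically supplies whichever demi-closedness condition Theorem \ref{propo1} requires. First I would observe that in $\mathbb{R}^n$ weak convergence coincides with strong (norm) convergence, since all norms on a finite-dimensional space are equivalent and bounded sequences have convergent subsequences. Consequently, for a continuous operator $S:\mathbb{R}^n\to\mathbb{R}^n$, if $x^k\rightharpoonup y$ (equivalently $x^k\to y$) and $S(x^k)-x^k\to 0$, then continuity of $S$ gives $S(x^k)\to S(y)$, while $x^k\to y$ gives $S(x^k)-x^k\to S(y)-y$; comparing limits yields $S(y)-y=0$. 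Hence $S-Id$ is demi-closed at $0$ for every continuous $S$.

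Next I would apply this observation to $S=T_i$ for each $i=1,\ldots,m$: since each $T_i$ is assumed continuous, each $T_i-Id$ is demi-closed at $0$, so hypothesis ($iii$) of Theorem \ref{propo1} is satisfied. (Alternatively one notes that finite compositions and convex combinations of continuous operators are continuous, so $U_t-Id$ and $T-Id$ are demi-closed at $0$ as well, i.e.\ ($i$) and ($ii$) hold too; any one of them suffices.) The remaining hypotheses of Theorem \ref{propo1} --- namely $\bigcap_{i\in I}\mathrm{Fix}\,T_i\neq\emptyset$, $\Omega\cap \mathrm{Fix}\,T\neq\emptyset$, $\sigma=\sigma_{max}$, and $\lambda\in[\varepsilon,2-\varepsilon]$ --- are exactly the standing assumptions of the corollary, so Theorem \ref{propo1} applies verbatim and yields a sequence converging weakly to a point of $\mathrm{Fix}\,T\cap\Omega$. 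Finally, invoking once more that weak convergence in $\mathbb{R}^n$ is the same as strong convergence, the weak convergence delivered by Theorem \ref{propo1} is in fact convergence in norm, which is precisely the assertion of Corollary \ref{propo2}.

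I do not anticipate a genuine obstacle here; the statement is essentially a corollary by design, and the proof is a two-line reduction. The only point demanding a modicum of care is the demi-closedness verification: one must be slightly careful that $T$ itself (a convex combination of compositions $U_t=T_{i^t_{m_t}}\cdots T_{i^t_1}$) inherits continuity, which is immediate since composition and finite convex combination preserve continuity. Everything else is a matter of citing Theorem \ref{propo1} and the elementary identification of weak and strong topologies on $\mathbb{R}^n$.
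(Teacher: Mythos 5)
Your proposal is correct and follows essentially the same route as the paper: the paper likewise notes that in $\mathbb{R}^n$ the continuity of the $T_i$ (hence of $U_t$ and $T$) makes all the relevant operators minus the identity demi-closed at zero, and then invokes Theorem \ref{propo1}, with weak convergence coinciding with norm convergence in finite dimensions. Your write-up merely spells out these steps in more detail than the paper does.
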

\begin{remark}\label{rem7}
The same convergence analysis as Corollary \ref{propo2} is still
true for Algorithm \ref{algt1} where $\{T_i\}_{i\in I}$ are
continuous and strictly relaxed cutter with $Fix T\ne\emptyset.$
\end{remark}

%%%%%%%%%%%%%%%%%%%%%%%%%%%%%%%%%%%%%%%%%%%%%%%%%%%%%%%%%%%%%%%%%%
%                             Applications
%%%%%%%%%%%%%%%%%%%%%%%%%%%%%%%%%%%%%%%%%%%%%%%%%%%%%%%%%%%%%%%
\section{Applications}\label{apl}
In this section we reintroduce two state-of-the-art iteration
methods which are based on strictly relaxed cutter operators.
First we begin with block iterative methods which are used for
solving linear systems of equations (inequalities) and later we
employ subgradient projections for solving nonlinear convex
feasibility problems. In all numerical tests, the case
$\alpha_t=1$ with various $\lambda_k$ where the number of string
is one, i.e. $E=1$, means that the only cutter operator, see
\cite{CC}, is used. The rest of pairs in all tables are our
results for various $\alpha_t$ and $\lambda_k.$ Also, for
simplicity, we assume $\alpha_t=\alpha$ and $\lambda_k=\lambda$ in
all tests where $t=1,\dots,p$ and $k\geq 0$. All the numerical
results are performed with Intel(R) Xenon(R) E5440 CPU 2.83 GHz,
8GB RAM, and the codes are written in Matlab R2013a.
\subsection{Block iterative methods}\label{sec:block}
 Let $A\in\mathbb{R}^{m\times n}$ and
$b\in\mathbb{R}^m$ are given. We assume the consistent linear
system of equations
\begin{equation}\label{block1ap}
Ax = b.
\end{equation}
Let $A$ and $b$ be partitioned into $p$ row-blocks
$\{A_t\}_{t=1}^p$ and $\{b^t\}_{t=1}^p,$ respectively. We now
consider the following operators
\begin{equation}\label{t8ap}
T_t(x)=x+\frac{\alpha_t}{\rho(A_t^T M_t A_t)} A_{t}^T
M_{t}(b^t-A_t x) \textrm{ for } t=1,\cdots,p
\end{equation}
where $\{\alpha_t\}_{t=1}^p$ and $\{M_t\}_{t=1}^p$ stand for
relaxation parameters and symmetric positive definite weight
matrices respectively. Also $\rho(B)$ denotes the spectral radius
of $B.$ If $E=1$ and $\sigma=1$ then Algorithm \ref{algt1}, with
operators (\ref{t8ap}), is called fully simultaneous method and
fully sequential method where $p = 1$ and $p = m,$ respectively.
With $M_t$ equal to the identity we get the classical Landweber
method \cite{landwe}. Other choices give rise to, e.g., Cimmino's
method \cite{Cimmino}, the CAV method \cite{CGG2001}, and, with a
componentwise scaling, the DROP algorithm \cite{CEHN2008}.

If $0<\varepsilon\leq \alpha_{t}\leq 2-\varepsilon \textrm{ for }
t=1,\dots, p,$ then \cite[lemmas 3 and 4]{NDH2012} gives that the
operator $T_t$ of (\ref{t8ap}) is not only strictly relaxed cutter
but also is nonexpansive. Note that the operator $T_t$ is an
$\alpha_t$-relaxed cutter. Since $\{T_t\}_{t=1}^p$ are
nonexpansive, we conclude that $\{T_t-Id\}_{t=1}^p$ are
demi-closed at zero, see Remark \ref{rem2}. Since composition and
convex combination of nonexpansive operators are nonexpansive, we
get that operators $U_t-Id$ and $T-Id$ are demi-closed at zero.
Therefore all conditions of Theorem \ref{theorem2} are satisfied.
However, based on Corollary \ref{propo2} and Remark \ref{rem7} we
do not need to verify the demi-closedness property of $T_t.$

\begin{remark}
It should be noted that a wide range of iterative methods, for
solving linear systems of equations, is based on strictly relaxed
cutter operators, see \cite[lemma 4]{NDH2012} and
\cite{CE2002,CEHN2008,CGG2001,DHC2009,EHL1981,E1980,EN2009,JW2003}.
Furthermore, such iterations appear in many applications which one
can find, for example, in signal processing, system theory,
computed tomography, proton computerized tomography and other
areas.
\end{remark}

Using (\ref{dec3}) the step size function (\ref{sigmamax}) can be
written as
\begin{equation}\label{smax1}
\sigma_{max}=\frac{\displaystyle\sum_{t=1}^{E}\omega_t\left(\displaystyle\sum_{i=1}^{m_{t}}\left(\frac{1}{\alpha_{i}}
-\frac{1}{2}\right)\|y^{i}\|^2+\frac{1}{2}\|\sum_{i=1}^{m_{t}}y^{i}\|^2\right)}{\|T(x)-x\|^2}
\end{equation}
where $x\in H\backslash Fix T.$ Therefore, using (\ref{t8ap}), we
have
\begin{equation}\label{smax2}
\sigma_{max}=\frac{\displaystyle\sum_{t=1}^{E}\omega_t\left(\displaystyle\sum_{i=1}^{m_{t}}\frac{\alpha_{i}(2-\alpha_{i})}
{\rho(A_{i}^{T}M_{i}A_{i})^2}\|A_{i}^{T}M_{i}(b_{i}-A_{i}u^{i-1})\|^2+\|U_{t}(x)-x\|^2\right)}{2\|\sum_{t=1}^{E}\omega_{t}U_{t}(x)-x\|^2}
\end{equation}
where $u^{i}$ is defined in (\ref{notation}).

\begin{remark}
As a special case of (\ref{t8ap}), we assume $p=m$ and
$M_{t}=\frac{1}{\|a^{t}\|^2}$ for $t=1,\cdots,m$ where $a^t$ and
$b^t$ show the $t$-row of $A$ and $b$ respectively. Therefore we
obtain $T_t$ as below
\begin{equation}\label{projection}
T_{t}(x)=x+\alpha_{t}\frac{b^{t}-\left<a^{t},x\right>}{\|a^{t}\|^2}a^{t}
\end{equation}
which is the orthogonal projection, where $\alpha_{t}=1,$ of $x$
onto hyperplane $\left\{x\in \mathbb{R}^n
|\left<a^{t},x\right>=b^{t}\right\}$ for $t=1,2, \cdots, m.$
Therefore, Algorithms \ref{algt1} and \ref{pro-algt1} are the
accelerated version of full sequential Kaczmarz's method and its
projected version, respectively.
\end{remark}

%%%%%%%%%%%%%%%%%%%%%%%%%%%%%%%%%%%%%%%%%%%%%%%%%%%%%%%%%%%%%%%%%%%%%%%%%%%%%
%    subgradient
%%%%%%%%%%%%%%%%%%%%%%%%%%%%%%%%%%%%%%%%%%%%%%%%%%%%%%%%%%%%%%%%%%%%%%%
\subsection{Subgradient projection}\label{sec:sub}
The subgradient method uses the subgradient calculations instead
of orthogonal projections onto the individual sets for solving
convex feasibility problem. We will examine the subgradient
projection operator in Algorithms \ref{algt1} and \ref{pro-algt1}.
As it is mentioned before, one can use Algorithm \ref{pro-algt1}
based on Theorem \ref{propo1}.

Let $i\in J=\{1,2,\cdots, M\}$, the index set, and $g_i :D
\subseteq \mathbb{R}^{n} \rightarrow \mathbb{R}$ be convex
functions. We consider finding a solution $x^{*}\in D$ (assuming
its existence) of the following system of convex inequalities
\begin{equation}\label{funcap}
g_i(x)\leq 0,~~~\textrm{ for } i\in J.
\end{equation}
Let $g^{+}_{i}(x)= max \{ 0 , g_{i}(x)\}$, and denote the solution
set of (\ref{funcap}) by $S=\{x | g_{i}(x)\leq 0,~i\in J \}.$ Thus
$g^{+}_{i}(x)$ is a convex function and
\begin{equation}\label{moadelap}
S=\{x | g^{+}_{i}(x)= 0,~i\in J\}.
\end{equation}

%We divide the problem (\ref{moadelap}) into $m$ blocks as follow.
%Set $B_{t} \subseteq J$ such that $\bigcup^{m}_{t=1} B_{t}=J$.
Let $\ell_{i}(x)$ and $\partial g_{i}^{+}(x)$ denote subgradient
and set of all subgradients of $g_i$ at $x,$ respectively. Here a
vector $t\in\mathbb{R}^n$ is called subgradient of a convex
function $g$ at a point $y\in \mathbb{R}^n$ if $\langle t,
x-y\rangle \leq g(x)-g(y)$ for every $x\in \mathbb{R}^n.$ It is
known that the subgradient of a convex function always exist. We
consider the following operators which are used in cyclic
subgradient projection method, see \cite{cl81},
\begin{equation}\label{tt1}
T_{t}(x)=x-\alpha_{t}\frac{g_{t}^{+}(x)}{\|\ell_{t}(x)
\|^2}\ell_{t}(x)
\end{equation}
where $0<\varepsilon\leq \alpha_{t}\leq 2-\varepsilon.$ Clearly
$T_t$ is an $\alpha_t$-relaxed cutter. Based on analysis of
\cite[section 4]{CC}, see also \cite[theorem 4.2.7]{C2013},
$T_t-Id$ is demi-closed at $0$ under a mild condition and this
condition holds for any finite dimensional spaces. Therefore the
last condition of Theorem \ref{theorem2} is satisfied.

Similar to (\ref{smax2}), we obtain the following step size
function for the operator (\ref{tt1})
\begin{equation}\label{sigmamaxSubgradient}
\sigma_{max}=\frac{\displaystyle\sum_{t=1}^{E}\omega_t\left(\displaystyle\sum_{i=1}^{m_{t}}\alpha_{i}(2-\alpha_{i})\left(\frac{g_{i}^{+}(u^{i-1})}{\|\ell_{i}(u^{i-1})
\|}\right)^2+\|U_{t}(x)-x\|^2\right)}{2\|\sum_{t=1}^{E}\omega_{t}U_{t}(x)-x\|^2}.
\end{equation}

\begin{remark}\label{rem9}
Assuming $E=1$ and $\alpha_{i}=1$ for $i=1,\cdots,m$ give the
special case of Algorithm \ref{algt1}. If we use $T_t$ of
(\ref{tt1}) and the step size function (\ref{sigmamaxSubgradient})
then Algorithm \ref{algt1} leads to accelerated scheme of cyclic
subgradient projections method which was proposed in \cite{CC} and
\cite{NA2013}. %Furthermore, to use Algorithm \ref{pro-algt1} we
%have to assume that $T_t$ is continuous. In all of our future
%tests, to apply Algorithm \ref{pro-algt1}, we consider the
%continuity of $T_t.$
\end{remark}

\begin{table}[h!]
\caption{ The results of $10$ quadratic examples using different
values of $\alpha_t$ and $\lambda_k$ where $E=1.$ The first and
the second parts of a pair indicate the average of iteration
numbers and the computational times (per second), respectively.}
\label{tab:3}
\begin{center}\tiny
\begin{tabular}{ccc|c|cc}
\hline%\noeqnarray{\smallskip}
$\lambda_{\downarrow},~\alpha \rightarrow$ & $0.001$ & $0.5$ & $1$ & $1.5$ & $2-0.001$ \\
\toprule
 0.001 &(*,*) &(*,*)&(*,*)&(*,*)&(*,*) \\
 0.5 &(57,45.65) &(56,44.76)&(60,47.81)&(74,58.98)&(84,6680) \\\hline
 1 &(28,22.81) &(36,28.39)&(42,33.94)&(40,32.10)&(44,35.16)\\\hline
1.5 & (14,11.57) &(25,20.10)&(17,13.78)&(14,11.39)&(15,12.22) \\
2-0.001 &(19,15.47) &(25,20.08)&(22,17.71)&(19,15.29)&(15,12.17)\\
\bottomrule
\end{tabular}
\end{center}
\end{table}

We next examine $10$ nonlinear systems of inequalities with $500$
variables which are produced randomly. Each nonlinear system
consists of $100$ convex functions. We now explain how one of them
is produced. After generating the matrices
$G_i\in\mathbb{R}^{500\times 500}$ and the vectors
$c_i\in\mathbb{R}^{500}$ for $i=1,\cdots,100,$ we consider the
following convex functions
\begin{equation}\label{num1}
g_i(x) = x^T G_i^T G_i x + c_i^T x + d_i, ~i=1,\dots,100
\end{equation}
and calculate $d_i$ such that $g_i(y)\leq 0$ where
$y=(1,\cdots,1)^T.$ Therefore the solution set $S =\{ x|g_i(x)\leq
0,~i=1,\cdots, 100\}$ has at least one point. The components of
$G_i$ and $c_i$ lie in the interval $[-1,1].$ We generate randomly
a starting point, which its components lie in $[-10,10],$ for all
$10$ problems. Also we stop the iteration when $g^{+}_{i}(x^k)\leq
10^{-6}$ for all $i=1,\cdots, 100$ or $\|Tx^k-x^k\|\leq 10^{-16}$
or the number of iterations exceeds $k=1000.$ Also, we use equal
weights for all numerical tests.

In Table \ref{tab:3} we consider one string, i.e., $E=1.$ Also,
the first and the second parts of a pair indicate the average of
iteration numbers and the computational times (per second),
respectively. The sign $``*"$ means no feasible point is achieved
within our mentioned criteria. Based on Table \ref{tab:3}, the
best result of \cite{CC}, i.e., using $\alpha=1$ with different
$\lambda,$ is obtained by choosing $\alpha=1$ and $\lambda=1.5.$
However, based on our analysis which allows us to use
$\alpha\in(0,2),$ by setting  $\alpha=1.5$ and $\lambda=1.5$ we
reduce both, the number of iterations and the computational time.
Therefore, based on Table \ref{tab:3}, one may select a proper
value for $\alpha$ to reduce the number of iterations.

To see the effect of using generalized relaxation technique (grt),
we consider various number of strings, i.e. $E=2, 4, 5, 10, 20,$
and assume $\alpha=\lambda=1.$ As it is seen in Table \ref{tab:4},
using grt reduces the number of iterations and consequently the
computational times. On the other hand, the only increasing of the
number of strings does not guarantee to reduce the number of
iterations, see the last line of Table \ref{tab:4}. Comparing the
results of Table \ref{tab:4} emphasizes that using grt together
with many strings is able to reduce notably the number of
iterations .

\begin{table}[h!]
\caption{ The results of $10$ quadratic examples using different
sizes of strings where $\alpha=\lambda=1.$ The first and the
second parts of a pair indicate the average of iteration numbers
and the computational times (per second), respectively.}
\label{tab:4}
\begin{center}\tiny
\begin{tabular}{cccccc}
\hline
$E$ & $2$ & $4$ & $5$ & $10$ & $20$ \\
\toprule
 \textrm{with grt}&(36,22.36) &(31,14.50)&(36,15.65)&(28,10.77)&(28,9.68) \\
 \textrm{without grt}&(85,52.47) &(170,79.41)&(212,92.01)&(430,160.91)&(868,298.74)
\\
\bottomrule
\end{tabular}
\end{center}
\end{table}
%%%%%%%%%%%%%%%%%%%%%%%%%%%%%%%%%%%%%%%%%%%%%%%%

\section{Conclusion}
In this paper we studied the generalized relaxation of string
averaging operator which is based on strictly relaxed cutter
operators. We showed that this operator is strictly relaxed cutter
operator by restricting the step size function. We analyzed a
fixed point iteration method with its projected version, based on
this operator. The capability of the method was examined by
employing state-of-the-art iterative methods. Our numerical tests
showed that one may choose relaxation parameters $\alpha\ne1$ to
reduce the number of iterations comparing with the case
$\alpha=1.$ The numerical results emphasize that using generalized
relaxation technique together with many strings is able to reduce
remarkably the number of iterations.
\section*{Acknowledgment}
\noindent We wish to thank an anonymous referee for helpful
suggestions which improved our paper.

%%%%%%%%%%%%%%%%%%%%%%%%%%%%%%%%%%%%%%%%%%%%%%%%%%%%%%%%%%%%

\end{document}